\documentclass{amsart}

\usepackage{amsmath}
\usepackage{amsthm}
\usepackage{amsfonts}
\usepackage{amssymb}

\usepackage{graphics}
\usepackage{epsfig}
\usepackage{color}
\usepackage{verbatim}

\newcommand\R{{\mathbf{R}}}

\newcommand\Z{{\mathbf{Z}}}

\renewcommand\P{{\mathbf{P}}}
\newcommand\E{{\mathbf{E}}}

\renewcommand\Im{{\operatorname{Im}}}
\renewcommand\Re{{\operatorname{Re}}}
\newcommand\eps{{\varepsilon}}


%







\newcommand\Dyson{{\operatorname{Sine}}}
\renewcommand\th{{\operatorname{th}}}

\subjclass{15A52}


\parindent = 5 pt
\parskip = 12 pt

\theoremstyle{plain}
  \newtheorem{theorem}{Theorem}
  \newtheorem{conjecture}[theorem]{Conjecture}

  \newtheorem{lemma}[theorem]{Lemma}
  \newtheorem{corollary}[theorem]{Corollary}

\theoremstyle{definition}
  \newtheorem{definition}[theorem]{Definition}
  \newtheorem{example}[theorem]{Example}
  \newtheorem{remark}[theorem]{Remark}

\begin{document}

\title[The Wigner-Dyson-Mehta conjecture]{The Wigner-Dyson-Mehta bulk universality conjecture for Wigner matrices}

\author{Terence Tao}
\address{Department of Mathematics, UCLA, Los Angeles CA 90095-1555}
\email{tao@math.ucla.edu}
\thanks{T. Tao is supported by a grant from the MacArthur Foundation, by NSF grant DMS-0649473, and by the NSF Waterman award.}

\author{Van Vu}
\address{Department of Mathematics, Rutgers, Piscataway, NJ 08854}
\email{vanvu@math.rutgers.edu}
\thanks{V. Vu is supported by research grants DMS-0901216 and AFOSAR-FA-9550-09-1-0167.}

\begin{abstract}  A well known conjecture of Wigner, Dyson, and Mehta asserts that the (appropriately normalized) $k$-point correlation functions of the eigenvalues of random $n \times n$ Wigner matrices in the bulk of the spectrum converge (in various senses) to the $k$-point correlation function of the Dyson sine process in the asymptotic limit $n \to \infty$.  There has been much recent progress on this conjecture; in particular, it has been established under a wide variety of decay, regularity, and moment hypotheses on the underlying atom distribution of the Wigner ensemble, and using various notions of convergence.  Building upon these previous results, we establish new instances of this conjecture with weaker hypotheses on the atom distribution and stronger notions of convergence.  In particular, assuming only a finite moment condition on the atom distribution, we can obtain convergence in the vague sense, and assuming an additional regularity condition, we can upgrade this convergence to locally $L^1$ convergence.

As an application, we determine the limiting distribution of the number of eigenvalues $N_I$ in a short interval 
$I$ of length $\Theta (1/n)$. As a corollary of this result, we obtain an extension of a result of Jimbo et. al. 
concerning the behavior of spacing in the bulk. 
\end{abstract}

\maketitle


\section{Introduction}

\subsection{Correlation functions}

This paper is concerned with the phenomenon of \emph{bulk universality} for the eigenvalue distribution of random Wigner ensembles.  To explain this phenomenon we need some notation.  Given a random Hermitian $n \times n$ matrix $M_n$, we can form the $n$ real eigenvalues (counting multiplicity), which we order as
$$ \lambda_1(M_n) \leq \ldots \ldots \le \lambda_n(M_n).$$
These $n$ random real variables can be viewed as describing a point process $\sigma(M_n) := \{ \lambda_1(M_n),\ldots,\lambda_n(M_n) \}$.  Associated to this point process, we can define the (unnormalized) \emph{$k$-point correlation functions} $R_n^{(k)}: \R^k \to \R^+$, which we can define by duality as the unique symmetric measurable function (or distribution) on $\R^k$ with the property that
\begin{equation}\label{rdef}
 \int_{\R^k} F(x_1,\ldots,x_k) R_n^{(k)}(x_1,\ldots,x_k)\ dx_1\ldots dx_k =
k! \E \sum_{1 \leq i_1 < \ldots < i_k \leq n} F( \lambda_{i_1}(M_n), \ldots, \lambda_{i_k}(M_n) )
\end{equation}
for any symmetric continuous, compactly supported function $F: \R^k \to \R$.  In particular, with this normalization, we have
$$ \int_{\R^k} R_n^{(k)}(x_1,\ldots,x_k)\ dx_1\ldots dx_k = \frac{n!}{(n-k)!}.$$
The $L^1$-normalized $k$-point correlation functions $p_n^{(k)} := \frac{(n-k)!}{n!} R_n^{(k)}$ are also used in the literature, but we will stick with the above normalization.

If $M_n$ is a discrete random matrix ensemble, then $R_n^{(k)}$ can only be defined in the sense of distributions.  But if $M_n$ is a continuous random matrix ensemble (with a continuous probability density function), then $R_n^{(k)}$ becomes a continuous symmetric function that vanishes whenever the $x_1,\ldots,x_k$ are not all distinct, and can be defined more explicitly for distinct reals $x_1,\ldots,x_k$ by the formula
$$ R_n^{(k)}(x_1,\ldots,x_k) = \lim_{\eps \to 0} \frac{1}{\eps^k} \P( E_{\eps,x_1,\ldots,x_k} )$$
where $E_{\eps,x_1,\ldots,x_k}$ is the event that there is an eigenvalue of $M_n$ in the interval $[x_i,x_i+\eps]$ for each $1 \leq i \leq k$.  Thus, for instance, the joint probability distribution of the ordered eigenvalues $\lambda_1 \leq \ldots \leq \lambda_n$ is given by the probability measure
$$ n! \rho_n(\lambda_1,\ldots,\lambda_n) 1_{\lambda_1 \leq \ldots \leq \lambda_n} d\lambda_1 \ldots d\lambda_n,$$
where $\rho_n := \frac{1}{n!} R_n^{(n)}$, and the remaining correlation functions $R_n^{(k)}$ can be derived from the probability density function $\rho_n$ by the integration formula
$$ R_n^{(k)}(x_1,\ldots,x_k) = \frac{n!}{(n-k)!} \int_{\R^{n-k}} \rho_n(x_1,\ldots,x_n)\ dx_{k+1} \ldots dx_n.$$
The $1$-point correlation function $R_n^{(1)}$ controls the density of states; indeed, for continuous random ensembles at least, one has the formula
$$ \E N_I = \int_I R_n^{(1)}(x)\ dx$$
for any interval $I$, where $N_I$ is the number of eigenvalues in $I$.

\section{Wigner matrices and the semicircle law}

We now restrict attention to a specific class of random matrix ensembles, namely the \emph{Wigner ensembles}.

\begin{definition}[Wigner matrices]\label{def:Wignermatrix}  Let $\xi$, $\tilde \xi$ be real random variables with mean zero and variance $1$,
and let $n \geq 1$ be an integer. An $n \times n$ \emph{Wigner Hermitian matrix} $M_n$ with atom distributions $\xi$, $\tilde \xi$ is defined to be a  random Hermitian $n \times n$ matrix $M_n$ with upper triangular complex entries $\frac{1}{\sqrt{n}} \zeta_{ij} :=  \frac{1}{\sqrt{n}} \frac{1}{ \sqrt 2} (\xi_{ij} + \sqrt{-1} \tau_{ij})$ ($1\le i < j \le n$) and diagonal real entries $\frac{1}{\sqrt{n}} \zeta_{ii}$
($1\le i \le n$) where the $\xi_{ij}, \tau_{ij}, \zeta_{ii}$ are jointly independent random variables, with $\xi_{ij}, \tau_{ij}$ having the distribution of $\xi$ for $1 \leq i < j \leq n$, and $\zeta_{ii}$ having the distribution of $\tilde \xi$ for $1 \leq i \leq n$.
\end{definition}

\begin{example} The famous \emph{Gaussian Unitary Ensemble} (GUE) is the special case of the Wigner ensemble in which the atom distributions $\xi, \tilde \xi$ are gaussian random variables, $\xi, \tilde \xi \equiv N(0,1)$.  At the opposite extreme, the \emph{complex Bernoulli ensemble} is an example of a discrete Wigner ensemble in which the atom distributions $\xi, \tilde \xi$ equal $+1$ with probability $1/2$ and $-1$ with probability $1/2$.
\end{example}

For these matrices, the bulk distribution of the eigenvalues is governed by the \emph{semicircular distribution} $\rho_{\operatorname{sc}}(x)\ dx$, where $\rho_{\operatorname{sc}}: \R \to \R$ is the function
$$ \rho_{\operatorname{sc}}(x) := \frac{1}{2\pi} (4-x^2)_+^{1/2}.$$
Indeed, we have

\begin{theorem}[Wigner semicircular law]  Let $M_n$ be a Wigner Hermitian matrix with fixed atom distributions $\xi,\tilde \xi$ (independent of $n$).  Then the normalized $1$-point correlation function $x \mapsto \frac{1}{n} R_n^{(1)}(x)$ converges weakly to $\rho_{\operatorname{sc}}(x)$ in the limit $n \to \infty$, thus
$$ \int_\R \frac{1}{n} R_n^{(1)}(x) F(x)\ dx \to \int_\R \rho_{\operatorname{sc}}(x) F(x)\ dx$$
for any continuous, compactly supported function $F: \R \to \R$.
\end{theorem}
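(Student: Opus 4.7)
The plan is to use the classical moment method. Since the semicircle measure $\rho_{\operatorname{sc}}(x)\,dx$ has compact support $[-2,2]$, it is uniquely determined by its moments $m_k := \int_\R x^k \rho_{\operatorname{sc}}(x)\,dx$, which equal $0$ for $k$ odd and the Catalan number $C_m = \frac{1}{m+1}\binom{2m}{m}$ for $k=2m$ even. Combined with a standard tightness argument (approximating a compactly supported continuous test function by a polynomial on a large interval, and controlling the polynomial's contribution outside that interval via a Markov-type bound coming from a higher moment), the theorem reduces to showing, for each fixed positive integer $k$,
\begin{equation}\label{mconv}
\frac{1}{n}\int_\R x^k R_n^{(1)}(x)\,dx \;=\; \frac{1}{n}\E\,\tr(M_n^k) \;\longrightarrow\; m_k.
\end{equation}

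Since $\xi,\tilde\xi$ are only assumed to have variance one, a preliminary truncation is needed. Replace each $\zeta_{ij}$ by $\zeta_{ij}\mathbf{1}_{|\zeta_{ij}|\le T}$, recentered to remain mean zero, call the resulting matrix $M'_n$, and use the Hoffman--Wielandt inequality
\[
\E\sum_i |\lambda_i(M_n)-\lambda_i(M'_n)|^2 \;\le\; \E\|M_n-M'_n\|_{HS}^2,
\]
which by dominated convergence is $o(n)$ after first sending $n\to\infty$ and then $T\to\infty$. This reduces the proof to the case in which all moments of the atoms are finite, so the expansion of traces below can be carried out freely.

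The heart of the argument is the combinatorial identity
\[
\frac{1}{n}\E\,\tr(M_n^k) \;=\; \frac{1}{n^{1+k/2}} \sum_{i_1,\ldots,i_k\in[n]} \E[\zeta_{i_1 i_2}\zeta_{i_2 i_3}\cdots \zeta_{i_k i_1}],
\]
viewed as a sum over closed walks of length $k$ on the vertex set $[n]$. By joint independence and the mean-zero hypothesis, a walk contributes zero unless every edge of its underlying multigraph is visited at least twice. The Wigner ``double-tree'' analysis then shows that, when $k=2m$, the leading-order contribution $\Theta(n^{m+1})$ comes from those walks whose underlying edge set is a tree on $m+1$ distinct vertices with each edge crossed exactly twice (once in each direction, giving a factor $\E|\zeta_{ij}|^2 = 1$ per edge in the Hermitian normalisation). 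These walks are in bijection with pairs (rooted plane tree on $m+1$ vertices, injective labeling by $[n]$), yielding a count of $C_m \cdot n(n-1)\cdots(n-m) = C_m n^{m+1}(1+O(1/n))$. For $k$ odd, no such double-tree can exist, and the leading term vanishes.

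All remaining walk topologies (underlying graph containing a cycle, or some edge traversed $\ge 3$ times) visit at most $k/2$ distinct vertices, and their associated moment products are uniformly bounded after truncation, so together they contribute at most $O(1/n)$ relative to the main term. Combining the main count with these error estimates yields \eqref{mconv} and hence the theorem. The main obstacle is the combinatorial enumeration of Wigner double-tree walks and the careful verification that all other walk topologies are subleading; the truncation step is routine and relies only on the finite-variance hypothesis.
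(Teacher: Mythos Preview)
Your sketch is the standard moment-method proof of the semicircle law and is essentially correct; the combinatorial double-tree analysis and the reduction to bounded atoms via Hoffman--Wielandt are the right ingredients. One small point: after truncation you should also renormalise the variance back to $1$ (not just recentre), since the double-tree count uses $\E|\zeta_{ij}|^2=1$ exactly; this is a further $o(1)$ perturbation handled by the same Hoffman--Wielandt estimate.

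That said, the paper does not actually prove this theorem: its entire proof is the one-line citation ``See \cite{Pas}'' (Pastur's 1973 paper). So you are not reproducing the paper's argument but rather supplying a self-contained proof where the paper defers to the literature. Pastur's original approach is via the Stieltjes transform rather than moments; your combinatorial route is the alternative classical one (going back to Wigner himself) and is arguably more elementary, at the cost of the somewhat delicate enumeration of walk topologies.
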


\begin{proof} See \cite{Pas}.
\end{proof}

The semicircular law suggests that at any \emph{bulk energy level} $-2 < u < 2$, the mean eigenvalue spacing of the eigenvalues $\lambda_i(M_n)$ in the vicinity of $u$ should be close to $\frac{1}{n \rho_{\operatorname{sc}}(u)}$.  As such, it is natural to introduce the \emph{normalized $k$-point correlation functions} $\rho_{n,u}^{(k)}: \R^k \to \R^+$ for $1 \leq k \leq n$, localized to the energy level $u$, by the formula
\begin{equation}\label{rho-def}
 \rho_{n,u}^{(k)}( t_1,\ldots,t_k ) := \frac{1}{(n\rho_{\operatorname{sc}}(u))^k} R_n^{(k)}\left( u + \frac{t_1}{n \rho_{\operatorname{sc}}(u)}, \ldots, u + \frac{t_k}{n \rho_{\operatorname{sc}}(u)} \right).
 \end{equation}
(Again, when $M_n$ is a discrete ensemble, the $\rho_{n,u}^{(k)}$ should be interpreted as distributions or measures rather than as symmetric functions.)

Now fix $k \geq 1$, $-2 < u < 2$, and the atom distributions $\xi,\tilde \xi$, and consider the limiting behaviour of the normalised $k$-point correlation functions $\rho_{n,u}^{(k)}$ as $n \to \infty$.  A basic conjecture in the subject, due to Wigner, Dyson, and Mehta, can be stated informally as follows:

\begin{conjecture}[Wigner-Dyson-Mehta bulk universality conjecture, informal version] \cite{Meh,Erd} Fix $k \geq 1$, $-2 < u < 2$, and atom distributions $\xi, \tilde \xi$.  Then $\rho_{n,u}^{(k)} \to \rho_{\Dyson}^{(k)}$ as $n \to \infty$, where the \emph{Dyson sine kernel $k$-point correlation functions} $\rho_{\Dyson}^{(k)}: \R^k \to \R^+$ are defined by the formula
\begin{equation}\label{dyson-def}
 \rho_{\Dyson}^{(k)}(t_1,\ldots,t_k) := \det( K_\Dyson( t_i, t_j) )_{1 \leq i,j \leq k}
\end{equation}
and $K_\Dyson(t, t') := \frac{\sin(\pi(t'-t))}{\pi(t'-t)}$ is the \emph{Dyson sine kernel}.
\end{conjecture}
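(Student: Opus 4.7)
The plan is to separate the conjecture into two pieces: a direct, explicit computation for the Gaussian Unitary Ensemble, and a comparison argument that transfers the GUE answer to an arbitrary Wigner ensemble. For GUE, the joint eigenvalue density is explicit, and the correlation functions admit the determinantal representation $R_n^{(k)}(x_1,\ldots,x_k) = \det(K_n(x_i,x_j))_{1\le i,j\le k}$ where $K_n$ is the Hermite (Christoffel-Darboux) kernel. The Plancherel-Rotach asymptotics for the oscillator wave functions then show that at any bulk energy $-2<u<2$ the rescaled kernel
\[\frac{1}{n\rho_{\operatorname{sc}}(u)}\,K_n\!\left(u + \frac{s}{n\rho_{\operatorname{sc}}(u)},\; u + \frac{t}{n\rho_{\operatorname{sc}}(u)}\right)\]
converges locally uniformly on $\R^2$ to $K_{\operatorname{Sine}}(s,t)$. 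Passing this through the determinantal identity gives $\rho_{n,u}^{(k)} \to \rho_{\operatorname{Sine}}^{(k)}$ locally uniformly in the GUE case, hence in every reasonable mode of convergence.

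To transfer this to a general Wigner ensemble with atom distributions $\xi,\tilde\xi$, I would invoke a Four Moment Theorem: if two Wigner ensembles have atom distributions whose first four moments agree, then for any smooth compactly supported $F:\R^k\to\R$ and any bulk indices $i_1<\cdots<i_k$, the rescaled expectations
\[\E F\bigl(n\rho_{\operatorname{sc}}(u)(\lambda_{i_1}(M_n)-u),\ldots,n\rho_{\operatorname{sc}}(u)(\lambda_{i_k}(M_n)-u)\bigr)\]
agree up to $o(1)$ across the two ensembles. Combined with the GUE result and the duality \eqref{rdef}, this settles the conjecture whenever $\xi,\tilde\xi$ match the Gaussian to four moments. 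The proof of such a swapping theorem is analytic in character: one writes each eigenvalue as a smooth function of the matrix entries, replaces entries one at a time, and uses Taylor expansion together with a local eigenvalue rigidity bound (essentially a local form of the semicircle law) to ensure that each replacement produces an error small enough to be summable over the $O(n^2)$ entries.

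For atom distributions that do not happen to match the Gaussian to four moments, one additional smoothing step is required: form $M_n^{(t)} := \sqrt{1-t}\,M_n + \sqrt{t}\,G_n$ with $G_n$ an independent GUE matrix and $t$ a small negative power of $n$, and argue that (a) the local statistics of $M_n^{(t)}$ and $M_n$ differ by $o(1)$ (a continuity argument along the Ornstein-Uhlenbeck / Dyson Brownian Motion flow), and (b) one can construct a companion Wigner distribution whose first four moments match those of the smoothed ensemble, to which the Four Moment step then applies. The main obstacle I anticipate is step (a) under only a finite moment hypothesis on $\xi$: the usual continuity machinery relies on level repulsion and a local semicircle law at nearly optimal scale, both of which are traditionally proved under stronger tail assumptions, so one must either work at a slightly coarser scale or refine the continuity argument so that only moment bounds on $\xi$ enter. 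Once comparison is established for all atom distributions, vague convergence against general $F\in C_c(\R^k)$ follows by mollification plus a density argument, while upgrading to $L^1_{\operatorname{loc}}$ convergence under the stated regularity hypothesis requires an additional uniform local bound on the $\rho_{n,u}^{(k)}$ themselves, extracted from the smoothness of the atom law.
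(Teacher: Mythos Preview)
The statement is a conjecture; the paper proves the vague-convergence instance (Theorem~\ref{main}) under a finite moment hypothesis, and your sketch should be read against that proof. Your two-step architecture---universality for a gauss-divisible reference ensemble, then transfer by moment comparison---is indeed the paper's, and your treatment of GUE and of the exact-four-moment-matching case is correct. The gap is in how you handle general atom distributions. In the paper the reference ensemble is $M_n''$, obtained by running Ornstein--Uhlenbeck for time $t=n^{-1+\eps}$ on a bounded surrogate $\xi'$ that matches $\xi$ exactly to fourth order; universality for $M_n''$ at a \emph{fixed} energy $u$ comes from \cite{EPRSY} (the Br\'ezin--Hikami/Johansson determinantal analysis pushed down to short times). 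The bridge from $M_n$ back to $M_n''$ is not a ``continuity along Dyson Brownian motion'' argument---that is the local-relaxation-flow mechanism of \cite{ESY}, and it only delivers \emph{averaged} vague convergence in $u$---but the approximate Four Moment Theorem (Theorem~\ref{theorem:AppFour}): because $t$ is so small, $\xi''$ matches $\xi$ exactly to second order and within $O(n^{-1+\eps})$ at third and fourth, so the swapping applies directly. With this in place your step (a) is the wrong tool for the fixed-$u$ conclusion and your step (b) is superfluous.

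You also omit the ingredient the paper flags as the new input: the eigenvalue rigidity theorem of \cite{EYY3}. It is invoked not inside the Four Moment Theorem (where you mention a rigidity bound) but outside it, to restrict the indices $i_1,\ldots,i_k$ in $\sum_{i_1<\cdots<i_k}\E G(n\lambda_{i_1},\ldots,n\lambda_{i_k})$ to a window of length $O(n^\eps)$ around $n\int_{-2}^u\rho_{\operatorname{sc}}$. Without this localisation the per-tuple error $O(n^{-c_0})$ from Theorem~\ref{theorem:AppFour} cannot be summed over all $\binom{n}{k}$ tuples; this is exactly where earlier work needed the vanishing third-moment hypothesis, and rigidity is what removes it here.
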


This conjecture is imprecise because the nature of the convergence of the function (or measure) $\rho_{n,u}^{(k)}$ to $\rho_\Dyson^{(k)}$ is not specified.  There are of course infinitely many modes of convergence that one could consider.  A   mode of convergence which has drawn considerable attention is vague convergence:

\begin{itemize} 

\item (Vague convergence) For any continuous, compactly supported function $F: \R^k \to \R^+$, one has
$$ \lim_{n \to \infty} \int_{\R^k} F(t) \rho_{n,u}^{(k)}(t)\ dt = \int_{\R^k} F(t) \rho_\Dyson^{(k)}(t)\ dt.$$

\end{itemize}

In recent years,  attention has also been paid to a weaker form of convergence:

\begin{itemize} 

\item (Average vague convergence) For any continuous, compactly supported function $F: \R^k \to \R^+$, one has
$$ \lim _{b \to 0^{+}} \lim_{n \to \infty} \frac{1}{2b} \int_{u-b}^{u+b}\int_{\R^k} F(t) \rho_{n,u}^{(k)}(t)\ dt = \int_{\R^k} F(t) \rho_\Dyson^{(k)}(t)\ dt.$$

\end{itemize}

The first objective of this paper is to provide an almost complete solution for vague convergence (Theorem \ref{main}). Namely, in this paper we establish the following theory, which asserts that universality holds in the vague convergence sense under the assumption that the atom variables have bounded high moments.

\begin{theorem}[Bulk universality for vague convergence]\label{main}  The Wigner-Dyson-Mehta conjecture for vague convergence is true whenever the atom distributions $\xi, \tilde \xi$ have finite $C_0^{th}$ moment, thus $\E |\xi|^{C_0}, \E |\tilde \xi|^{C_0} < \infty$, for some sufficiently large absolute constant $C_0$.
\end{theorem}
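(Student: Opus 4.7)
The plan is to prove Theorem \ref{main} via the standard three-step comparison strategy: compare an arbitrary Wigner matrix $M_n$ satisfying the hypotheses to an auxiliary Gaussian-divisible Wigner matrix $M_n'$, for which universality can be obtained from the prior work of Erd\H{o}s, Schlein, Yau and collaborators. By standard approximation, together with an a priori bound on $\rho_{n,u}^{(k)}$ on compact sets (coming from the local semicircle law and the $C_0$-th moment hypothesis), it suffices to establish vague convergence against smooth, compactly supported test functions $F$.

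The key step is a moment-matching construction. Using the assumption that $\xi,\tilde\xi$ have finite $C_0$-th moments with $C_0$ large, I would produce atom distributions $\xi',\tilde\xi'$ and a small parameter $t_n = n^{-c}$ such that the Gaussian-divisible variables $\sqrt{1-t_n}\,\xi' + \sqrt{t_n}\, g_1$ and $\sqrt{1-t_n}\,\tilde\xi' + \sqrt{t_n}\, g_2$ (with $g_1,g_2$ independent standard Gaussians) match the first four moments of $\xi,\tilde\xi$ respectively. Let $M_n'$ be the Wigner matrix built from these Gaussian-divisible atoms. The Erd\H{o}s-Schlein-Yau theory of local relaxation flow and Dyson Brownian motion then provides vague convergence of $\rho_{M_n',u}^{(k)}$ to $\rho_\Dyson^{(k)}$ for $M_n'$; the Gaussian convolution is precisely what enables the Dyson Brownian motion techniques to apply even with a time parameter $t_n \to 0$.

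Finally, the Four Moment Theorem for correlation functions applies to the pair $M_n, M_n'$, since their atom distributions match in the first four moments, yielding
$$\left|\int_{\R^k} F(t)\, \rho_{n,u}^{(k)}(t)\,dt - \int_{\R^k} F(t)\, \rho_{M_n',u}^{(k)}(t)\,dt\right| = o(1)$$
as $n\to\infty$, for each smooth, compactly supported $F$. Combining this with the universality for $M_n'$ yields the theorem.

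The main obstacle I anticipate is the upgrade of the Erd\H{o}s-Schlein-Yau universality result from \emph{averaged} vague convergence (integrating $u$ over a shrinking window) to genuine vague convergence at a single energy level. The energy averaging is crucially used in the entropy and relaxation-flow estimates for Dyson Brownian motion; removing it requires sharper local rigidity for the eigenvalue locations in the bulk, or an interpolation argument that isolates a single energy. By contrast, the moment-matching construction and the application of the Four Moment Theorem should be comparatively routine once the $C_0$-th moment hypothesis is invoked with a large enough $C_0$.
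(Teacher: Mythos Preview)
Your overall three-step architecture (moment-match to a gauss-divisible ensemble, invoke universality there, transfer via the Four Moment Theorem) is exactly what the paper does. However, you misidentify both which tool handles the gauss-divisible step and where the genuine difficulty lies.

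For the gauss-divisible matrix $M_n'$ you propose to use the Erd\H{o}s--Schlein--Yau local relaxation flow, and then correctly note that this only yields \emph{averaged} vague convergence. The paper avoids this obstacle entirely: it invokes instead \cite[Proposition 4]{ERSTVY}, which is the Br\'ezin--Hikami/Johansson-type result extended in \cite{EPRSY} to times $t = n^{-1+\eps}$. That result already gives vague convergence at a \emph{fixed} energy level $u$ for gauss-divisible ensembles, with no averaging needed. So the obstacle you flag as the main one is not present if you pick the right input.

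The step you describe as ``comparatively routine'' is in fact where the new idea enters. The Four Moment Theorem (Theorem~\ref{theorem:AppFour}) controls
\[
\bigl|\E G(\lambda_{i_1}(A_n),\ldots,\lambda_{i_k}(A_n)) - \E G(\lambda_{i_1}(A_n'),\ldots,\lambda_{i_k}(A_n'))\bigr| \le n^{-c_0}
\]
for each fixed index tuple $(i_1,\ldots,i_k)$. To compare the correlation functions one must sum this over all tuples, and there are $\binom{n}{k}$ of them, so the naive bound blows up. In earlier work this summation was controlled by assuming the third moment of $\xi$ vanishes, which gave strong eigenvalue localization. The paper's contribution is to replace that assumption with the rigidity of eigenvalues from \cite{EYY3}: rigidity pins down which $O(n^{\eps})$ indices can contribute to the localized test function $G$, so only $O(n^{k\eps})$ tuples survive, and the sum is $O(n^{k\eps - c_0}) = o(1)$. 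You do mention rigidity, but you propose it as a tool to de-average the relaxation-flow result; its actual role is here, in making the Four Moment comparison summable. Without this step your argument has a gap.

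A minor variant: the paper first matches $\xi$ exactly to a bounded $\xi'$ and then forms $\xi'' = e^{-t/2}\xi' + (1-e^{-t})^{1/2} g$, which only \emph{approximately} matches $\xi$ to fourth order (error $O(n^{-1+\eps})$); hence it uses the asymptotic Four Moment Theorem with $\delta^{(4)}$-matching. Your proposal to arrange exact four-moment matching with the gauss-divisible atom is also workable and slightly cleaner, but does not change the substance.
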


Under mild assumptions (such as convergence which is locally uniform in the $u$ parameter), vague convergence also implies averaged vague convergence.  However, this implication cannot be automatically reversed unless one has some uniform control on the regularity of the $\rho_{n,u}$ or $\rho_{n,u'}$, such as equicontinuity.  We also observe that for vague convergence that one can restrict attention without loss of generality to smooth compactly supported functions $F$, thanks to the Weierstrass approximation theorem.

Having established the Wigner-Dyson-Mehta conjecture in this form, we would like to raise new challenges  by considering other (stronger) modes of convergence. We focus on the following three modes (in decreasing order of  strength).

\begin{enumerate}
\item (Local uniform convergence) For every compact subset $K$ of $\R^k$, one has
$$ \lim_{n \to \infty} \sup_{t \in K} |\rho_{n,u}^{(k)}(t)-\rho_\Dyson^{(k)}(t)| = 0.$$
\item (Local $L^1$ convergence) For every compact subset $K$ of $\R^k$, one has
$$ \lim_{n \to \infty} \int_K |\rho_{n,u}^{(k)}(t)-\rho_\Dyson^{(k)}(t)|\ dt = 0.$$
\item (Weak convergence) For any $L^\infty$, compactly supported function $F: \R^k \to \R^+$, one has
$$
 \lim_{n \to \infty} \int_{\R^k} F(t) \rho_{n,u}^{(k)}(t)\ dt = \int_{\R^k} F(t) \rho_\Dyson^{(k)}(t)\ dt.
$$

\end{enumerate}

These three stronger notions of convergence, namely local uniform convergence, local $L^1$ convergence, and weak convergence, are only natural for continuous Wigner ensembles, since for discrete Wigner ensembles, $\rho_{n,u}^{(k)}$ is a discrete probability measure and is thus supported on a set of Lebesgue measure zero.  

We may now pose the question of determining the atom distributions $\xi, \tilde \xi$ for which the Wigner-Dyson-Mehta conjecture is true for local uniform convergence (resp. weak convergence).  By examining carefully and slightly modifying  an existing proof in current literature \cite{EPRSY} we note  the following partial result.

\begin{theorem}[Bulk universality for local $L^1$ convergence]\label{main2} For $k \geq 1$, there exists an integer $J_k \geq 1$ such that the following statements hold.  Assume that the atom distributions $\xi, \tilde \xi$ have a continuous distribution $e^{-V(x)} e^{-x^2}\ dx$, $e^{-\tilde V(x)} e^{-x^2}\ dx$ obeying the bounds
$$ e^{-V(x)} e^{-x^2} \leq C e^{-\delta x^2}$$
and
$$ |\frac{d^j}{dx^j} V(x)| \leq C (1+x^2)^m$$
for some $C,\delta,m>0$ and all $1 \leq j \leq J_k$.  Then the Wigner-Dyson-Mehta conjecture holds for these atom distributions and this value of $k$ in the local $L^1$ sense.

If $k=2$, then one can take $J_k=6$.
\end{theorem}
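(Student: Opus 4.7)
The strategy is to follow the three-step scheme of \cite{EPRSY} --- (i) universality for Gaussian-divisible matrices via Dyson Brownian Motion and local relaxation flow, (ii) a reverse heat flow argument approximating a smooth atom distribution $e^{-V(x)}e^{-x^2}\,dx$ by convolving an auxiliary measure with a Gaussian of variance $t = n^{-1+\delta}$, and (iii) a comparison step transferring convergence from the approximating ensemble to the true one --- and to refine the bookkeeping so that the output is already in the local $L^1$ sense on compact sets rather than in the averaged-vague sense in which it is usually quoted.

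First I would handle the Gaussian-divisible case $M_n^{(t)} := M_n^{(0)} + \sqrt{t}\, G_n$ with $G_n$ a scaled GUE. The local relaxation flow estimates of Erd\H{o}s--Schlein--Yau produce convergence of the associated $\rho_{n,u}^{(k)}$ to $\rho_\Dyson^{(k)}$. The key additional input is that once a Gaussian component is present, the local semicircle law on scales $\gg 1/n$, combined with the Helffer--Sj\"ostrand representation of the resolvent, yields uniform $C^m$ bounds on $\rho_{n,u}^{(k)}$ on compact windows in the bulk at the rescaled scale $1/n$. Equicontinuity together with vague convergence (either from Theorem \ref{main} or directly from the DBM output) then upgrades to local uniform, hence local $L^1$, convergence on compact sets by a standard Arzel\`a--Ascoli argument.

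Next I would run the reverse heat flow: the bounds on $V$ and its derivatives up to order $J_k$ allow one to construct an auxiliary (signed) density $\tilde\mu$ such that $\tilde\mu * g_t$ approximates $e^{-V(x)}e^{-x^2}\,dx$ to error $O(t^{J_k/2})$ times a polynomial in $\|V\|_{C^{J_k}}$, in a Sobolev norm strong enough to feed into the $k$-point correlation estimates. Choosing $J_k$ large enough makes this error $o(1)$ as $n \to \infty$, and it fits inside the regularity estimates of the previous step. A Green function / four moment comparison then transports the local $L^1$ convergence from the Gaussian-divisible ensemble to the original one. The sharper value $J_k = 6$ at $k = 2$ emerges from a direct count of how many $V$-derivatives are consumed in the two-point case by the reverse heat flow error on one side and by the Sobolev control required for equicontinuity of $\rho_{n,u}^{(2)}$ on the other.

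The main obstacle will be verifying, uniformly in $n$ and in $u$ ranging over a compact bulk interval, that the correlation functions $\rho_{n,u}^{(k)}$ of the Gaussian-divisible matrix lie in a $C^m$ ball with constants independent of $n$ on compact sets in the $t$-variable; the upgrade from vague to local $L^1$ convergence rests entirely on this equicontinuity, so the regularity hypotheses on $V$ and the quantitative local semicircle law must be combined carefully to ensure that every step of the \cite{EPRSY} scheme --- including the reverse heat flow error and the comparison inequality --- preserves, rather than degrades, the compact $C^m$ bounds on the correlation functions.
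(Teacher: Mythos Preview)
Your proposal takes a substantially different route from the paper, and the central step you rely on is not supported by the tools you cite.

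The paper's argument is a short duality observation. It re-reads the proof of weak convergence in \cite{EPRSY} and notes that each of the two error terms $(I)$ and $(II)$ appearing in \cite[\S4]{EPRSY} is actually bounded by $c(n)\,\|F\|_{L^\infty}$, where $c(n)\to 0$ depends on the support $K$ of $F$ but not on $F$ itself. (For $(II)$ this uses that \cite[Proposition 3.3]{EPRSY}, which is proved via the Br\'ezin--Hikami contour integral representation, already gives convergence that is \emph{uniform} on bounded sets for the Gauss-divisible model.) Once one has
\[
\Bigl|\int_{\R^k} F(t)\,\rho_{n,u}^{(k)}(t)\,dt - \int_{\R^k} F(t)\,\rho_\Dyson^{(k)}(t)\,dt\Bigr|\le c'(n)\,\|F\|_{L^\infty}
\]
for all bounded $F$ supported in $K$, duality immediately yields $\int_K |\rho_{n,u}^{(k)}-\rho_\Dyson^{(k)}|\le c'(n)$. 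No equicontinuity, no Arzel\`a--Ascoli, and no new regularity estimates are needed.

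Your scheme instead tries to manufacture uniform $C^m$ bounds on $\rho_{n,u}^{(k)}$ for the Gauss-divisible ensemble and then apply compactness. The gap is in how you propose to obtain those bounds. The local semicircle law controls the resolvent only at scales $\eta\gg 1/n$, and the Helffer--Sj\"ostrand machinery turns resolvent bounds into estimates on \emph{integrated} linear statistics, not into pointwise $C^m$ control of $k$-point correlation functions at the scale $1/n$. Neither tool furnishes the equicontinuity you need. There is also a conflation of methods: you invoke the local relaxation flow of \cite{ESY}, but that approach produces convergence only after averaging in the energy parameter, whereas \cite{EPRSY} (the reference the theorem is built on) uses the Br\'ezin--Hikami determinantal formula and saddle-point analysis, which is precisely what gives the uniform-on-compacta convergence in \cite[Proposition 3.3]{EPRSY}. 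If you want to salvage your outline, replace the semicircle/Helffer--Sj\"ostrand step by a direct appeal to that proposition; but at that point the duality argument above is both shorter and avoids Arzel\`a--Ascoli entirely.
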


Thus, if one wants to obtain the Wigner-Dyson-Mehta conjecture in the local $L^1$ sense for a fixed value of $k$, one only needs a finite number of regularity hypotheses on the distribution; but if one wants to use this theorem to obtain local $L^1$ convergence for all $k$, one needs an infinite number of such hypotheses.  This is likely an artefact of the proof method, however, and one should be able to get local $L^1$ convergence for all $k$ assuming only a finite amount of regularity.

\subsection{Prior results} \label{section:prior}

Theorem \ref{main} is the last step of  a long series of results, which we now survey. 
We will focus exclusively on the bulk case $-2 < u < 2$; there are also several analogous results in the edge case $u = \pm 2$ (see \cite{sinai1}, \cite{sinai2}, \cite{Sos1}, \cite{TVedge}, \cite{Joh2}, \cite{EYY3}) which we do not discuss here.

The first results on Wigner-Dyson-Mehta conjecture were for the GUE ensemble.  In this case, one has the explicit \emph{Gaudin-Mehta formula}
$$ R_n^{(k)}(x_1,\ldots,x_k) = \det( K_n(x_i,x_j) )_{1\leq i,j \leq k}$$
for any $k \geq 1$, where the kernel $K_n(x,y)$ is given by the formula
$$ K_n(x,y) := \sqrt{n} \sum_{k=0}^{n-1} P_k(\sqrt{n} x) e^{-nx^2/2} P_k(\sqrt{n} y) e^{-ny^2/2}$$
and $P_0,\ldots,P_{n-1}$ are the first $n$ \emph{Hermite polynomials}, normalized so that each $P_i$ is of degree $i$, and are orthonormal with respect to the measure $e^{-x^2/2}\ dx$; see \cite{gin,Meh}.  We may renormalize the Gaudin-Mehta formula as
$$ \rho_{n,u}^{(k)}(t_1,\ldots,t_k) = \det( K_{n,u}(t_i,t_j) )_{1\leq i,j \leq k}$$
for any $k \geq 1$ and $-2 < u < 2$, where
$$ K_{n,u}( t, t' ) := \frac{1}{n \rho_{\operatorname{sc}}(u)} K_n\left( u + \frac{t}{n\rho_{\operatorname{sc}}(u)}, u + \frac{t'}{n \rho_{\operatorname{sc}}(u)} \right).$$
A standard calculation using the Plancherel-Rotarch asymptotics of Hermite polynomials and the Christoffel-Darboux formula (see \cite{Dys}) shows that $K_{n,u}$ converges locally uniformly to $K_\Dyson$ as $n \to \infty$, where $-2 < u < 2$ is fixed.  As such, the Wigner-Dyson-Mehta conjecture is true in the locally uniform sense, and thus also in the local $L^1$, weak, vague, and averaged vague senses; see \cite[Lemma 3.5.1]{AGZ} for a proof.

Analogous results are known for much wider classes of invariant random matrix ensembles, see e.g. \cite{DKMVZ}, \cite{PS}, \cite{BI}.  However, we will not discuss these results further here, as they do not directly impact on the case of Wigner ensembles.

Returning to the Wigner case, the next major breakthrough was by Johansson \cite{Joh1}, who considered atom distributions $\xi, \tilde \xi$ that were \emph{gauss divisible} in the sense that they could be expressed as
$$ \xi = e^{-t/2} \xi_1 + (1-e^{-t})^{1/2} \xi_2; \quad \tilde \xi = e^{-t/2} \tilde \xi_1 + (1-e^{-t})^{1/2} \tilde \xi_2,$$
where $t>0$, $\xi_1, \tilde \xi_1$ were real random variables of mean zero and variance one, and $\xi_2, \tilde \xi_2 \equiv N(0,1)$ were gaussian random variables that were independent of $\xi_1, \tilde \xi_1$ respectively.  Equivalently, the distributions of $\xi, \tilde \xi$ are obtained from those of $\xi_1, \tilde \xi_1$ by applying the Ornstein-Uhlenbeck process for time $t$, and the distributions of the Wigner matrix $M_n$ are similarly obtained from an initial Wigner ensemble $M_{n,1}$ by a matrix version of the Ornstein-Uhlenbeck process, with the eigenvalues then evolving by the process of \emph{Dyson Brownian motion}.  Note that gauss divisible distributions are automatically continuous (and even smooth), and so it makes sense to talk about convergence in the weak or locally uniform senses in this setting.  The result of \cite{Joh1} is then that if the Ornstein-Uhlenbeck time $t$ is fixed and positive independently of $n$, and if the factor distributions $\xi_1, \tilde \xi_1$ have sufficiently many moments finite, then the Wigner-Dyson-Mehta conjecture is true in the weak sense.  Informally, the results of \cite{Joh1} assert that the renormalized $k$-point correlation functions $\rho_{n,u}^{(k)}$ converge to equilibrium by time $t$ for any $t>0$ independent of $n$.  The main tool used in \cite{Joh1} was an explicit determinantal formula for the correlation functions in the gauss divisible case, essentially due to Br\'ezin and Hikami \cite{brezin}.

In Johansson's result, the time parameter $t > 0$ had to be independent of $n$.  It was realized by Erd\H{o}s, Ramirez, Schlein, and Yau that one could obtain many further cases of the Wigner-Dyson-Mehta conjecture if one could extend Johansson's result to much shorter times $t$ that decayed at a polynomial rate in $n$.  This was first achieved (again in the context of weak convergence) for $t > n^{-3/4+\eps}$ for an arbitrary fixed $\eps>0$ in \cite{ERSY}, and then to the essentially optimal case $t > n^{-1+\eps}$ (for weak convergence) in \cite{EPRSY}, assuming that the atom distributions $\xi, \tilde \xi$ were continuous whose distribution was sufficiently smooth distribution (e.g. when $k=2$ one needs a $C^6$ type condition), and also decayed exponentially.  The methods used in \cite{EPRSY} were an extension of those in \cite{Joh1}, combined with an approximation argument (the ``method of time reversal'') that approximated a continuous distribution by a gauss divisible one (with a small value of $t$); the arguments in \cite{ERSY} are based instead on an analysis of the Dyson Brownian motion.

Simultaneously with results in \cite{EPRSY}, the authors in \cite{TV} introduced a different approach, based on what we call the \emph{Four Moment Theorem}, that allowed one to extend claims such as those in the Wigner-Mehta-Dyson conjecture (in the context of vague convergence) from one choice of atom distributions to another, provided that the first four moments in the atom distribution $\xi$ were matching (or close to matching), and provided that the atom distributions $\xi, \tilde \xi$ obeyed an exponential decay condition, namely that
\begin{equation}\label{exp-decay}
\P( |\xi| \geq t ), \P( |\tilde \xi| \geq t ) \leq C t^{-c} 
\end{equation}
for all $t>0$ and some constants $C,c>0$.  However, the distributions $\xi,\tilde \xi$ were not required to be continuous, and in particular could be discrete.  By combining the Four Moment Theorem with the results of Johansson (which provided a rich class of comparison distributions with which to match moments), the Wigner-Mehta-Dyson conjecture
for vague convergence was then established in \cite{TV} for atom distributions $\xi$ that were supported on at least three points and whose third moment $\E \xi^3$ vanished, assuming the exponential decay condition \eqref{exp-decay} on $\xi$ and $\tilde \xi$.  The vanishing third moment condition was needed in order to obtain a strong localization result for the individual eigenvalues $\lambda_i(M_n)$ of the Wigner matrix $M_n$.  The three point condition is a technical condition needed to in order to match moments with a better behaved ensemble, but is quite mild, as it effectively only excludes the case of Bernoulli random ensembles.

Shortly afterwards, it was realized in \cite{ERSTVY} that the methods in \cite{EPRSY} and \cite{TV} could be combined to handle a wider class of atom distributions, but at the cost of weakening vague convergence to averaged vague convergence.  Specifically, in \cite{ERSTVY} the Wigner-Mehta-Dyson conjecture for averaged vague convergence was established for atom distributions $\xi, \xi'$ that were assumed to have an exponential decay condition \eqref{exp-decay} , but for which no regularity, support, or moment conditions were imposed.  The need to retreat to averaged vague convergence was again due to the lack (at the time) of a strong localization result of individual eigenvalues for this ensemble; in particular, as remarked in \cite{ERSTVY} one could upgrade averaged vague convergence to vague convergence if one re-imposed the vanishing third moment condition.

Next, in \cite{ESY}, the method of \emph{local relaxation flow} was introduced, which provided a new way to analyze Dyson Brownian motion on short time scales that did not rely on explicit formulae of Br\'ezin-Hikami type.  This gave a simpler and more general approach to universality, but for technical reasons it relied more heavily on the ability to average in the energy parameter $u$, and so could only give progress on the Wigner-Mehta-Dyson conjecture in the context of averaged vague convergence.  In particular, in \cite{ESY} an alternate proof of the Wigner-Mehta-Dyson conjecture for averaged vague convergence was established\footnote{The main theorem in \cite{ESY} assumes a log-Sobolev condition on the atom distribution, but it is remarked in that paper that this condition can be removed assuming the three points condition, thanks to the Four Moment Theorem.} for atom distributions $\xi$ supported on at least three points and with $\xi,\tilde \xi$ obeying an exponential decay condition \eqref{exp-decay} (with the method extending for the first time to real symmetric or symplectic ensembles as well), with the three point condition then being removed subsequently in \cite{EYY2}.  These results were generalized to covariance matrices in \cite{BenP}, \cite{ESYY} and to generalised Wigner ensembles (in which the entries need not be iid and are allowed some fluctuation in their variances) in \cite{EYY}, \cite{EYY2}.

In \cite{TVscv} it was observed that the exponential decay condition \eqref{exp-decay} on the atom distributions in the Four Moment Theorem could be relaxed to a finite $C_0^{th}$ moment condition for some sufficiently large absolute constant $C_0$ (e.g. $C_0=10^4$ would suffice).  As a consequence, in many of the preceding results on the Wigner-Dyson-Mehta conjecture, the exponential decay hypothesis \eqref{exp-decay} could be replaced with a finite moment hypothesis.

In the very recent paper \cite{maltsev} (see in particular Corollary 1.3 of that paper), the Wigner-Dyson-Mehta conjecture for the strongest notion of convergence, namely local uniform convergence, was established in the $k=1$ case, assuming a sufficient number of smoothness and moment conditions on the atom distribution.

For more detailed discussion of the above results, see the surveys \cite{Erd}, \cite{Alice}.

\subsection{Application to the counting function}

Theorem \ref{main} implies in particular a moment bound for the counting function
$$ N_I := \# \{ 1 \leq i \leq n: \lambda_i(M_n) \in I \}$$
on intervals in the bulk at the scale of the mean eigenvalue spacing:

\begin{corollary}\label{nik}  Suppose the atom distributions obey the bounds $\E |\xi|^{C_0}, \E |\tilde \xi|^{C_0} < C_1$ for some sufficiently large $C_0>0$ and some $C_1>0$.  Let $\eps>0$, and let $I \subset [-2+\eps,2-\eps]$ be an interval of length at most $K/n$ for some $K>0$.  Then for any $k \geq 1$, we have\footnote{Here we use the asymptotic notation $X \ll Y$ or $X=O(Y)$ if $|X| \leq CY$ for some constant $C$ depending on the indicated parameters.} 
\begin{equation}\label{wek}
 \E N_I^k \ll 1
\end{equation}
where the implied constant depends only on $C_0, C_1,\eps,K,k$.
\end{corollary}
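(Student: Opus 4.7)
The plan is to reduce the moment bound on $N_I$ to an integral bound on the correlation functions $R_n^{(j)}$ with $j \le k$, and then apply Theorem \ref{main}. First, I use the standard identity expressing ordinary moments in terms of falling factorials, $N_I^k = \sum_{j=1}^k S(k,j) N_I (N_I-1) \cdots (N_I-j+1)$, where $S(k,j)$ are the Stirling numbers of the second kind. Taking expectations and using the definition \eqref{rdef},
\begin{equation*}
\E N_I^k = \sum_{j=1}^k S(k,j) \int_{I^j} R_n^{(j)}(x_1,\ldots,x_j)\, dx_1 \cdots dx_j,
\end{equation*}
so it suffices to bound each $\int_{I^j} R_n^{(j)}$ by a constant depending only on $\eps, K, k, C_0, C_1$.

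Next, let $u$ be the center of $I$, so $u \in [-2+\eps, 2-\eps]$ and $\rho_{\operatorname{sc}}(u) \le 1/\pi$. The change of variables $x_\ell = u + t_\ell/(n\rho_{\operatorname{sc}}(u))$ combined with \eqref{rho-def} rewrites the integral as
\begin{equation*}
\int_{I^j} R_n^{(j)}(x_1,\ldots,x_j)\, dx_1 \cdots dx_j = \int_{J^j} \rho_{n,u}^{(j)}(t_1,\ldots,t_j)\, dt_1 \cdots dt_j,
\end{equation*}
where $J = n\rho_{\operatorname{sc}}(u)(I-u)$ is a symmetric interval of length at most $K\rho_{\operatorname{sc}}(u) \le K/\pi$. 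Fixing, once and for all, a continuous compactly supported $F: \R \to [0,1]$ with $F \equiv 1$ on $[-K,K]$, the right-hand side is dominated by $\int_{\R^j} F^{\otimes j}(t)\, \rho_{n,u}^{(j)}(t)\, dt$. Theorem \ref{main} then yields
\begin{equation*}
\lim_{n \to \infty} \int_{\R^j} F^{\otimes j} \rho_{n,u}^{(j)} = \int_{\R^j} F^{\otimes j} \rho_\Dyson^{(j)},
\end{equation*}
and this limit is bounded by a finite constant independent of $u$: Hadamard's inequality for the positive semidefinite kernel matrix gives $\rho_\Dyson^{(j)}(t_1,\ldots,t_j) = \det(K_\Dyson(t_\alpha, t_\beta))_{\alpha,\beta} \le \prod_\alpha K_\Dyson(t_\alpha,t_\alpha) = 1$, so $\int F^{\otimes j} \rho_\Dyson^{(j)} \le (\int F)^j$.

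The main obstacle is uniformity in the energy parameter $u$: Theorem \ref{main} asserts only pointwise convergence in $u$, whereas here the center of $I$ depends on $n$. I would resolve this by a compactness and subsequence argument. Suppose the desired bound fails along some sequence $(n_m, I_m)$; extract a subsequence along which the centers $u_m \in [-2+\eps, 2-\eps]$ converge to some $u^\ast \in [-2+\eps, 2-\eps]$, and use the fact that the proof of Theorem \ref{main} proceeds by the Four Moment Theorem comparison with a reference ensemble (e.g.\ GUE) whose rescaled correlation functions converge locally uniformly in the energy parameter by the Plancherel--Rotach asymptotics, so the convergence in Theorem \ref{main} can be upgraded to be uniform in $u$ over compact subsets of $(-2,2)$, yielding the desired uniform upper bound for all large $n$. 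For the finitely many remaining small values of $n$, the trivial deterministic estimate $N_I \le n$ gives $\E N_I^k \le n^k$, completing the proof.
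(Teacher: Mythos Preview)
Your argument is correct and follows essentially the same route as the paper's proof: reduce $\E N_I^k$ to falling factorial moments, bound each $\E\binom{N_I}{j}$ by an integral of a fixed compactly supported test function against the rescaled correlation function $\rho_{n,u}^{(j)}$, invoke Theorem \ref{main}, and handle the uniformity in $u$ by appealing to the fact that the proof of Theorem \ref{main} actually gives bounds depending only on $C_0,C_1,\eps,K,k$. The paper does exactly this (with the small shortcut of writing only $\E\binom{N_I}{k}$, the other $j\le k$ being handled identically), and similarly dispatches small $n$ via the trivial bound $N_I\le n$.

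One minor remark: your compactness/subsequence reduction is superfluous once you invoke uniform-in-$u$ convergence from the proof of Theorem \ref{main}; the subsequence limit $u^\ast$ is never actually used, and a pure subsequence argument without that uniformity would not close (Theorem \ref{main} as stated gives nothing about $\rho_{n_m,u_m}^{(j)}$ with varying $u_m$). Since you end up relying on the uniformity anyway, you may as well invoke it directly, as the paper does.
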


Such an estimate was previously established in \cite{ESY-wegner} under an additional regularity hypothesis on the atom distribution, as well as a stronger (subgaussian) decay hypothesis.  

\begin{proof}  Fix $C_0,C_1,\eps,K,k$, and allow all implied constants to depend on these parameters.  In view of the trivial bound $N_I \leq n$ we may assume that $n$ is sufficiently large depending on the fixed parameters.  We may then take $I = [u-K/2n,u+K/2n]$ for some $-2+\eps/2 < u < 2-\eps/2$.  

It suffices to show that
$$ \E \binom{N_I}{k} \ll 1.$$
Using \eqref{rdef}, \eqref{rho-def}, we can bound
$$ \E \binom{N_I}{k} \leq \frac{1}{k!} \int_{\R^k} F(t) \rho_n^{(k)}(t)\ dt$$
where $F: \R^k \to \R^+$ is a smooth, compactly supported function that equals $1$ on $[-K/2,K/2]^k$.  From Theorem \ref{main} we have
$$ |\int_{\R^k} F(t) \rho_n^{(k)}(t)\ dt - \int_{\R^k} F(t) \rho_\Dyson^{(k)}(t)\ dt| \to 0$$
as $n \to \infty$, and so the left-hand side is bounded in $n$.  An inspection of the proof of Theorem \ref{main} shows that this bound only depends\footnote{For the purposes of establishing Theorem \ref{ni-asym} below, this more refined version of Theorem \ref{main} is not necessary, as one can use the weaker conclusion $\limsup_{n \to \infty} \E N_I^k < \infty$ instead as a substitute for \eqref{wek}.} on the quantities $C_0, C_1, \eps, K, k$ (i.e. it is uniform in $u$ and in $\xi, \tilde \xi$ once the quantities $C_0,C_1,\eps,K,k$ are fixed); thus
$$ |\int_{\R^k} F(t) \rho_n^{(k)}(t)\ dt - \int_{\R^k} F(t) \rho_\Dyson^{(k)}(t)\ dt| \ll 1.$$
As $\rho_\Dyson$ is bounded, the claim follows.
\end{proof}

With a little more effort, one can obtain the asymptotic law for $N_I$ in the case when $|I|$ is comparable to $1/n$:

\begin{theorem}[Asymptotic for $N_I$]\label{ni-asym}  Suppose the atom distributions obey the bounds $\E |\xi|^{C_0}, \E |\tilde \xi|^{C_0} < C_1$ for some sufficiently large $C_0>0$ and some $C_1>0$.  Let $\eps>0$ and $K > 0$ be independent of $n$.  For any $n$, let $u = u_n$ be an element of $[-2+\eps,2-\eps]$, and let $I = I_n$ be the interval $I := [u, u + \frac{K}{\rho_{\operatorname{sc}}(u) n}]$.  Then $N_I$ converges in distribution to the random variable $\sum_{j=1}^\infty \xi_j$, where $\xi_j$ are independent Bernoulli indicator random variables with expectation $\E \xi_j = p_j$, where $p_1 \geq p_2 \geq \ldots \geq 0$ are the eigenvalues of the (compact, positive semi-definite) integral operator $T f(x) := \int_{[0,K]} K_\Dyson(x,y) f(y)\ dy$ on $L^2([0,K])$.  In particular, the probability $\P(N_I=0)$ that $I$ has no eigenvalues is equal to $\prod_{j=1}^\infty (1-p_j) + o(1)$.
\end{theorem}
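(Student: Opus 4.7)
The strategy is the classical method of moments for point processes: establish convergence of all factorial moments of $N_I$ to those of $N^\Dyson_{[0,K]}$, the counting function of the Dyson sine process on $[0,K]$, then identify the limit law via the determinantal structure of the sine kernel, and finally upgrade moment convergence to convergence in distribution using the uniform moment bounds supplied by Corollary \ref{nik}.

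For the first step, combining \eqref{rdef} with the rescaling in \eqref{rho-def} yields the identity
$$ \E \binom{N_I}{k} = \frac{1}{k!} \int_{[0,K]^k} \rho_{n,u}^{(k)}(t_1,\ldots,t_k)\, dt_1 \cdots dt_k. $$
The integrand $1_{[0,K]^k}$ is not continuous, and $u = u_n$ varies with $n$; to justify passage to the limit via Theorem \ref{main} one sandwiches $1_{[0,K]^k}$ between compactly supported continuous functions $F_\pm$ agreeing with it except in an $\eta$-neighborhood of $\partial [0,K]^k$, uses Corollary \ref{nik} to bound the integral of $F_+ - F_-$ against $\rho_{n,u}^{(k)}$ by $O(\eta)$ uniformly in $n$, and exploits the fact (already invoked in the proof of Corollary \ref{nik}) that the bound in Theorem \ref{main} depends only on $C_0,C_1,\eps,K,k$ and not on $u \in [-2+\eps,2-\eps]$. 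Sending $n \to \infty$ and then $\eta \to 0$ gives
$$ \lim_{n\to\infty} \E \binom{N_I}{k} = \frac{1}{k!} \int_{[0,K]^k} \det\bigl(K_\Dyson(t_i,t_j)\bigr)_{1 \leq i,j \leq k}\, dt_1 \cdots dt_k = \E \binom{N^\Dyson_{[0,K]}}{k}. $$

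The right-hand side is then identified by a classical result for determinantal point processes: if $T$ denotes the trace-class integral operator on $L^2([0,K])$ with kernel $K_\Dyson$ and (nonnegative) eigenvalues $p_1 \geq p_2 \geq \cdots$, then $N^\Dyson_{[0,K]}$ has the same distribution as $\sum_{j \geq 1} \xi_j$ with $\xi_j$ independent Bernoulli of parameter $p_j$. Equivalently, the probability generating function of $N^\Dyson_{[0,K]}$ is the Fredholm determinant $\det(I - (1-z)T) = \prod_j(1 - (1-z)p_j)$; in particular $\P(N^\Dyson_{[0,K]} = 0) = \det(I - T) = \prod_j (1 - p_j)$.

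To turn convergence of factorial moments into convergence in distribution we use the method of moments. The limit random variable $\sum_j \xi_j$ has finite mean $\sum_j p_j = \operatorname{trace}(T) = K$ and an entire probability generating function, so all of its moments are finite and determine its distribution uniquely. Since factorial moments and ordinary moments are linear combinations of each other, the preceding step gives convergence of every ordinary moment of $N_I$ to the corresponding moment of $\sum_j \xi_j$; the uniform bound $\sup_n \E N_I^k < \infty$ from Corollary \ref{nik} supplies the required tightness. The principal technical obstacle is the first step: the need to combine the vague convergence of Theorem \ref{main} with a non-continuous test function and with an energy $u_n$ that depends on $n$. Both issues are overcome by the uniform-in-$u$ version of Theorem \ref{main} already used for Corollary \ref{nik}, together with a standard approximation argument using the moment bound \eqref{wek}.
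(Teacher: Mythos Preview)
Your approach is essentially the paper's: both establish convergence of the factorial moments $\E\binom{N_I}{k}$ via a sandwich argument and Theorem \ref{main}, identify the limit through the spectral decomposition of $K_\Dyson$ on $[0,K]$, and close via moment determinacy of the limit law.

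One step needs correction. You claim Corollary \ref{nik} bounds $\int (F_+ - F_-)\,\rho_{n,u}^{(k)}$ by $O(\eta)$ uniformly in $n$, but Corollary \ref{nik} only yields $O(1)$: it controls $\E N_J^k$ for intervals $J$ of bounded rescaled length, with no improvement as that length shrinks. An $O(\eta)$ bound uniform in $n$ would be a Wegner-type estimate, which requires regularity of the atom distribution and can fail for discrete ensembles at fixed $n$. The fix is the route the paper takes: apply Theorem \ref{main} to $F_+$ and $F_-$ separately to get
\[
\frac{1}{k!}\int F_-\,\rho_\Dyson^{(k)} \le \liminf_n \E\binom{N_I}{k} \le \limsup_n \E\binom{N_I}{k} \le \frac{1}{k!}\int F_+\,\rho_\Dyson^{(k)},
\]
and then use the boundedness of $\rho_\Dyson^{(k)}$ (not of $\rho_{n,u}^{(k)}$) to see that $\int(F_+-F_-)\rho_\Dyson^{(k)} = O(\eta)$. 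Sending $\eta \to 0$ gives the claimed limit. Your order of limits ``$n\to\infty$ then $\eta\to 0$'' is already consistent with this, so only the justification needs to change.

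Your treatment of moment determinacy is actually cleaner than the paper's. The paper proves a separate lemma that $p_j$ decays faster than any polynomial (via Fourier approximation of $K_\Dyson$ and the Courant--Fischer minimax principle), deduces a subgaussian tail for $\sum_j\xi_j$, and then invokes Carleman's criterion. Your observation that $\sum_j p_j = \trace T = K < \infty$ already makes the generating function $\prod_j(1-(1-z)p_j)$ entire, hence the moment generating function of $\sum_j\xi_j$ is finite on all of $\R$, suffices and avoids that lemma entirely. Where the paper computes $\int_{[0,K]^k}\rho_\Dyson^{(k)}$ by hand from the spectral expansion and orthonormality, you cite the standard determinantal-process fact; either is fine.
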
 

This result is well known for GUE, thanks to the theory of determinantal processes, but the extension to arbitrary Wigner matrices (with the finite moment condition) is new.  We prove this result in Section \ref{asym-sec}. 

By definition, the quantity $\prod_{j=1}^\infty (1-p_j)$ appearing in Theorem \ref{ni-asym} is equal to the Fredholm determinant $\det(1-T)$.  This determinant was computed by Jimbo, Miwa, Mori and Sato (see \cite{JMM} or \cite[Theorem 3.1.2]{AGZ}) as the solution to a certain ODE in the length parameter $K$.  As a consequence, we have

\begin{corollary}\label{lsv-gue} With the notation and assumptions of Theorem \ref{ni-asym}, one has
$$ \P( N_I = 0  ) \to \exp( \int_0^K  \frac{f(x)}{x} dx )$$
as $n \to \infty$, where $f:\R \to \R$ is the solution of the differential equation
$$(Kf'')^2 + 4 (Kf'-f) (Kf' -f + (f')^2) =0 $$
with the asymptotics  $f(K)  = \frac{-K}{\pi} -\frac{K^2}{\pi^2} -\frac{K^3}{\pi^3} + O(K^4)$ as $K \to 0$.
\end{corollary}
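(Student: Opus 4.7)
The plan is to combine Theorem \ref{ni-asym} with a classical identity of Jimbo, Miwa, Mori, and Sato that identifies the Fredholm determinant of the sine kernel with the solution of a nonlinear ODE.

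First, I would invoke Theorem \ref{ni-asym} to obtain
\[
\P(N_I = 0) \to \prod_{j=1}^\infty (1-p_j),
\]
where the $p_j$ are the eigenvalues of the integral operator $T$ with kernel $K_\Dyson$ on $L^2([0,K])$. Since $K_\Dyson$ is continuous on the compact square $[0,K]^2$, the operator $T$ is trace class; moreover $T$ is positive semi-definite with all eigenvalues in $[0,1]$ (it is the restriction of a projection to an interval), so the infinite product converges absolutely and equals the Fredholm determinant $\det(1-T)$.

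Second, I would invoke the Jimbo-Miwa-Mori-Sato identity \cite{JMM} (reproduced as \cite[Theorem 3.1.2]{AGZ}), which expresses
\[
\det(1-T) = \exp\!\left(\int_0^K \frac{f(x)}{x}\, dx\right),
\]
where $f$ solves the Painlev\'e V-type equation $(Kf'')^2 + 4(Kf'-f)(Kf'-f+(f')^2) = 0$ with the stated short-$K$ asymptotic expansion. The leading coefficient $-1/\pi$ in $f(K) = -K/\pi - K^2/\pi^2 - K^3/\pi^3 + O(K^4)$ can be sanity-checked from the series $\log\det(1-T) = -\sum_{k \geq 1} \tr(T^k)/k$, using $\tr(T) = \int_0^K K_\Dyson(x,x)\, dx = K/\pi$; the higher-order coefficients come from the analogous trace computations for $T^2$ and $T^3$, which match the perturbative solution of the ODE.

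Combining these two inputs immediately yields the corollary. There is essentially no obstacle beyond Theorem \ref{ni-asym}: the only care needed is to verify that the normalization conventions for the sine kernel and the auxiliary function $f$ in \cite{JMM} and \cite{AGZ} agree with ours (in particular the factor of $\pi$ in our kernel $K_\Dyson(t,t') = \sin(\pi(t'-t))/(\pi(t'-t))$). No genuinely new analysis is required, which is why the result is stated as a corollary rather than a theorem.
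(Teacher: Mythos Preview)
Your proposal is correct and takes exactly the same approach as the paper: the corollary is deduced immediately by combining Theorem \ref{ni-asym}, which gives $\P(N_I=0)\to\prod_j(1-p_j)=\det(1-T)$, with the Jimbo--Miwa--Mori--Sato evaluation of that Fredholm determinant as recorded in \cite[Theorem 3.1.2]{AGZ}. One small slip in your side remark: with the paper's normalization $K_\Dyson(x,x)=1$ one has $\tr(T)=K$ rather than $K/\pi$ (cf.\ the trace formula in Section \ref{asym-sec}), so the $-K/\pi$ leading term in the stated asymptotics for $f$ is precisely the kind of convention discrepancy you yourself cautioned about, not a confirmation.
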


It is likely that one can also obtain asymptotic distributions for $N_I$ for longer intervals $I$, as in \cite{dall}, but we do not pursue this issue here.

We thank the anonymous referees for many useful corrections and suggestions.

\section{Proof of Theorem \ref{main}}\label{main-pf}

In this section we prove Theorem \ref{main}.  This proof is a simple combination of existing arguments and results which have been obtained in the last few years. 
The core of our argument is the following. In \cite{TV}, the authors worked out a method to prove universality using the Four moment theorem combined with Johansson's theorem. A finer version of this theorem from \cite{ERSTVY} enables one to combine an approximate version of the Four moment theorem with a version of Johansson's theorem where the time parameter $t$ tends to zero with $n$. This gives universality under the assumption that the third moment vanishes. The extra observation here is that we can omit this assumption using a recent rigidity result from \cite{EYY3}.  Details now follow. 

We need the following technical definition.

\begin{definition}[Asymptotic moment matching]\label{def:asymmatch} Let $\delta^{(k)}= (\delta_1, \ldots, \delta_k)$ be a sequence of $k$ positive numbers. 
We say that two complex random variables $\zeta$ and $\zeta'$ \emph{$\delta^{(k)}$-match to order} $k$ if
$$\Big|  \E \Re(\zeta)^m \Im(\zeta)^l - \E \Re(\zeta')^m \Im(\zeta')^l \Big| \le \delta_{m+l} $$
for all $m, l \ge 0$ such that $m+l  \le k$. 
\end{definition}

Set 
$$ \delta^{(4)}   := (0,0, n^{- 1/2 -c} , n^{-1/2-c})$$ where $c$ is a positive constant. (The first two coordinates are 0 as we would like to keep the mean $0$ and variance $1$ in all models.)

The approximate version of the Four moment theorem is the following (implicit in \cite{ERSTVY} and can be deduced easily from the proof of the Four moment theorem):

\begin{theorem}[Asymptotic Four Moment Theorem]\label{theorem:AppFour} 
There is a small absolute constant $c_0 > 0$ such that for integer  $k \geq 1$ the following holds.
 Let $M_n = \frac{1}{\sqrt{n}} (\zeta_{ij})_{1 \leq i,j \leq n}$ and $M'_n = \frac{1}{\sqrt{n}} (\zeta'_{ij})_{1 \leq i,j \leq n}$ be
 two Wigher Hermitian matrices where the atom distributions have finite $C_0^{\th}$ moment for some sufficiently large $C_0$. Assume furthermore that for any $1 \le  i<j \le n$, $\zeta_{ij}$ and
 $\zeta'_{ij}$  $\delta^{(4)}$-match to order $4$
  and for any $1 \le i \le n$, $\zeta_{ii}$ and $\zeta'_{ii}$ match  to order $2$.  Set $A_n := n M_n$ and $A'_n := n M'_n$,
  and let $G: \R^k \to \R$ be a smooth function obeying the derivative bounds
\begin{equation}\label{G-deriv}
|\nabla^j G(x)| \leq n^{c_0}
\end{equation}
for all $0 \leq j \leq 5$ and $x \in \R^k$.
 Then for any $1 \le i_1 < i_2 \dots < i_k \le n$, and for $n$ sufficiently large we have
\begin{equation} \label{eqn:approximation}
 |\E ( G(\lambda_{i_1}(A_n), \dots, \lambda_{i_k}(A_n))) -
 \E ( G(\lambda_{i_1}(A'_n), \dots, \lambda_{i_k}(A'_n)))| \le n^{-c_0}.
\end{equation}
\end{theorem}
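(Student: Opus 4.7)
My plan is to follow the Lindeberg-style swapping strategy of the original Four Moment Theorem \cite{TV}, keeping careful track of the extra error terms introduced by the approximate (rather than exact) moment matching. This is essentially the repackaging already carried out in \cite{ERSTVY}, which I would import.

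First, I would enumerate the $O(n^2)$ independent entries and replace them one at a time, interpolating from $A_n$ to $A'_n$. Writing the total error as a telescoping sum, it suffices to bound each individual swap by $O(n^{-2-c'})$ for some $c'>0$, as summing over $O(n^2)$ swaps then yields $O(n^{-c'})$, which we can identify with the $n^{-c_0}$ in \eqref{eqn:approximation} after shrinking $c_0$. For a single off-diagonal swap, condition on all the other entries and write the quantity of interest as $F(\zeta) := \E[G(\lambda_{i_1}(A(\zeta)), \ldots, \lambda_{i_k}(A(\zeta)))]$, where $\zeta$ is the single atom variable being swapped and $A(\zeta)$ is $A_n$ with its $(i,j)$ and $(j,i)$ entries altered accordingly; diagonal swaps are handled similarly but need only order-$2$ matching.

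Second, Taylor expand $F$ to fifth order in the real and imaginary parts of $\zeta$. Taking expectations and subtracting the analogous expression for $\zeta'$, the zeroth-order term cancels, the first- and second-order coefficients cancel by exact matching of the first two moments, and the third- and fourth-order coefficients contribute at most $\delta_3$ and $\delta_4$ times the corresponding derivatives of $F$. Here I would import from the proof of the Four Moment Theorem (and its finite-moment refinement \cite{TVscv}) the eigenvalue stability estimate: outside an exceptional event of polynomially small probability, the first five derivatives of each $\lambda_{i_\ell}(A(\zeta))$ satisfy $|\partial^j| \le n^{O(c_0) - j/2}$, the decay in $j$ coming ultimately from level repulsion and from rigidity of eigenvalues \cite{EYY3}. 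Combining this with the derivative hypothesis \eqref{G-deriv}, each $j$th Taylor coefficient of $F$ is $\le n^{O(c_0)} n^{-j/2}$; the third- and fourth-order differences therefore contribute at most $n^{O(c_0)} n^{-1/2-c} n^{-3/2}$ per swap, while the fifth-order Taylor remainder contributes $n^{O(c_0)} n^{-5/2}$ per swap, using $\E|\zeta|^5 = O(1)$, which is available since $C_0 \ge 5$. For $c_0$ small enough both are $\le n^{-2-c'}$, as needed. On the exceptional event where the derivative bounds fail, we fall back on the trivial bound $|G| \le n^{c_0}$ from \eqref{G-deriv} at $j=0$ combined with the polynomial smallness of that event.

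The main obstacle is obtaining the fifth-order eigenvalue derivative bounds under only a finite-$C_0^{th}$-moment hypothesis, since the argument in \cite{TV} was originally written under exponential decay. This is precisely where I would invoke \cite{TVscv} as a black box; once that input is in place the rest of the argument is bookkeeping of the type already carried out in \cite{ERSTVY}, together with the observation (emphasized by the authors just before the statement) that the rigidity result of \cite{EYY3} removes the need for any vanishing-third-moment assumption that appeared in earlier incarnations.
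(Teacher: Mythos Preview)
Your Lindeberg swapping outline is exactly the argument the paper is pointing to: it states that Theorem~\ref{theorem:AppFour} is ``implicit in \cite{ERSTVY} and can be deduced easily from the proof of the Four Moment Theorem'', and Remark~\ref{extend} explains that the relaxation from exponential decay to a finite $C_0^{\th}$ moment comes from \cite{TVscv}. So the strategy and the references match.

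Two small misattributions are worth correcting. First, the eigenvalue derivative bounds $|\partial^j \lambda| \le n^{O(c_0)-j/2}$ do not use the rigidity theorem of \cite{EYY3}; they come from resolvent/Hadamard variation formulae together with a \emph{lower tail estimate on eigenvalue gaps} (level repulsion). It is precisely this gap estimate that originally required exponential decay, and Remark~\ref{extend} says the fix is a truncated Four Moment Theorem argument from \cite[Remark 38]{TVscv}, not rigidity. Second, the vanishing third moment condition you mention was never a hypothesis of the Four Moment Theorem itself; it appeared in the \emph{application} to universality (to localize the indices $i_1,\ldots,i_k$), and that is where rigidity from \cite{EYY3} enters in this paper --- in the proof of Theorem~\ref{main}, not here. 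With those two points disentangled, your sketch is correct.
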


\begin{remark}\label{extend} In the version of this theorem in \cite{TV}, \cite{ERSTVY} one required the atom distributions of both $M_n$ and $M'_n$ to obey an exponential decay condition \eqref{exp-decay}, rather than merely having a finite $C_0^{\th}$ moment.  However, the only reason why this exponential decay was ne	eded was to obtain a lower tail estimate for eigenvalue gaps (see \cite[Theorem 19]{TV}).  However, it was subsequently observed in \cite[Remark 38]{TVscv} that (by using a certain truncated version of the Four Moment Theorem) one could extend this lower tail estimate to the case of Wigner matrices whose atom distribution had finite $C_0^{\th}$ moment, and so Theorem \ref{theorem:AppFour} can also be extended to this regime.
\end{remark}

We fix $\xi, \tilde \xi, u$ as in that theorem, fix a $k \geq 1$, and fix a continuous compactly supported function $F: \R^k \to \R$; we will also need
a small constant $\eps > 0$ depending on $k$ ($\eps := \frac{1}{100k}$ will suffice).  We allow all implied constants in asymptotic notation to depend on these quantities.  It then suffices to show that
\begin{equation}\label{rhodys}
 \int_{\R^k} F \rho_{n,u}^{(k)}(t_1,\ldots,t_k)\ dt_1 \ldots dt_k = \int_{\R^k} F \rho_\Dyson^{(k)}(t_1,\ldots,t_k)\ dt_1 \ldots t_k + o(1).
 \end{equation}

Using the Stone-Weierstrass theorem to approximate a continuous compactly supported function uniformly by a smooth function of uniformly bounded support, we may assume without loss of generality that $F$ is smooth.  (The error in doing so can be upper bounded by a further application of \eqref{rhodys} applied to a slightly wider function $F$, taking advantage of the local integrability of $\rho_\Dyson^{(k)}$.)

We may assume that $n$ is sufficiently large depending on these quantities.  We also need a small absolute constant $\eps > 0$ ($\eps := 10^{-2}$ will suffice).  The distribution $\xi$ need not be bounded, but it is easy to see (e.g. using \cite[Lemma 28]{TV}) that there exists a bounded distribution $\xi'$ which matches moments with $\xi$ to fourth order in the sense that $\E \xi^i = \E (\xi')^i$ for $i=1,2,3,4$.  Next, we set $t := n^{-1+\eps}$ and introduce the modified atom distribution $\xi''$ defined by the formula
$$ \xi'' := e^{-t/2} \xi' + (1-e^{-t})^{1/2} g$$
where $g \equiv N(0,1)$ is independent of $\xi'$.  A routine computation shows that
\begin{equation}\label{xi-xip}
\E (\xi'')^i = \E \xi^i
\end{equation}
for $i=1,2$ and
\begin{equation}\label{xi-xip2}
 \E (\xi'')^i = \E \xi^i + O( n^{-1+\eps} )
\end{equation}
for $i=3,4$; thus $\xi$ and $\xi''$ have approximately matching moments to fourth order.  We define $\tilde \xi'$ and $\tilde \xi''$ from $\tilde \xi$ analogously.

Let $M''_n$ be the random matrix ensemble defined similarly to $M_n$, but with the atom distributions $\xi$, $\tilde \xi$ replaced by $\xi'', \tilde \xi''$.  This is another Wigner ensemble whose atom distribution is now gauss divisible with time parameter $t$, and which also obeys an exponential decay condition \eqref{exp-decay} (being the sum of a bounded distribution and a gaussian distribution).  As such, one can invoke \cite[Proposition 4]{ERSTVY} and conclude that $M''_n$ already obeys the required conclusion, thus
$$ \int_{\R^k} F (\rho_{n,u}^{(k)})''(t_1,\ldots,t_k)\ dt_1 \ldots dt_k = \int_{\R^k} F \rho_\Dyson^{(k)}(t_1,\ldots,t_k)\ dt_1 \ldots t_k + o(1),$$
where $(\rho_{n,u}^{(k)})''$ is defined analogously to $\rho_{n,u}^{(k)}$ but with $M_n$ replaced by $M''_n$.  It thus suffices to show that
$$ \int_{\R^k} F (\rho_{n,u}^{(k)})''(t_1,\ldots,t_k)\ dt_1 \ldots dt_k = \int_{\R^k} F \rho_{n,u}^{(k)}(t_1,\ldots,t_k)\ dt_1 \ldots t_k + o(1),$$
or equivalently (after a rescaling) that
\begin{align*}
& \sum_{1 \leq i_1 < \ldots < i_k \leq n}
\E G( n \lambda_{i_1}(M_n), \ldots, n \lambda_{i_k}(M_n) )\\
&\quad =
 \sum_{1 \leq i_1 < \ldots < i_k \leq n}
\E G( n \lambda_{i_1}(M''_n), \ldots, n \lambda_{i_k}(M''_n) ) + o(1)
\end{align*}
where $G: \R^k \to \R$ is the function
$$ G( t_1,\ldots,t_k ) := F( \rho_{\operatorname{sc}}(u) (t_1-nu), \ldots, \rho_{\operatorname{sc}}(u) (t_k-nu) ).$$
Note that the expression $G( \sqrt{n} \lambda_{i_1}(M_n), \ldots, \sqrt{n} \lambda_{i_k}(M_n) )$ is only non-zero when we have
\begin{equation}\label{lim}
\lambda_{i_1}(M_n),\ldots,\lambda_{i_k}(M_n) = u + O\left( \frac{1}{n} \right).
\end{equation}

Using the crude upper bound from \cite[Proposition 66]{TV}, we see that with probability $1-o(n^{-k})$, the number of eigenvalues $\lambda_i(M_n)$ or $\lambda_i(M'_n)$ that lie in the range \eqref{lim} is $O(n^{o(1)})$.  The exceptional event of probability $o(n^{-k})$ contributes at most $o(1)$ to the expression to be estimated and can thus be ignored.  In the remaining event, we see that the sum $\sum_{1 \leq i_1 < \ldots < i_k \leq n} G( n \lambda_{i_1}(M_n), \ldots, n \lambda_{i_k}(M_n) )$ is at most $O(n^{o(1)})$, and similarly for $M'_n$.  Thus we may in fact discard any event of probability $O(n^{-c})$ for any $c>0$.

We now need the following rigidity of eigenvalues theorem:

\begin{theorem}[Rigidity of eigenvalues]  If $0 < \eps, \kappa < 1$ are independent of $n$, and $-2+\kappa < u < 2-\kappa$, then for sufficiently large $n$, one has
$$ N_{[-2,u]}(M_n) = \int_{-2}^u \rho_{\operatorname{sc}}(x)\ dx + O(n^{\eps})$$
with probability $1-O(n^{-c})$ for some absolute constant $c>0$.
\end{theorem}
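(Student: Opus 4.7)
The plan is to deduce the rigidity bound from a local semicircle law for the Stieltjes transform, then convert control of the resolvent into control of the counting function via a contour integral.

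First, I would invoke (from \cite{EYY3}, or reprove by truncation) a local semicircle law valid under only a finite $C_0^{\th}$ moment condition: there exist $\delta > 0$ and $c_0>0$ such that uniformly for $E$ in any fixed compact subinterval of $(-2,2)$ and $\eta \in [n^{-1+\delta}, 1]$,
\begin{equation*}
|m_n(E+i\eta) - m_{\operatorname{sc}}(E+i\eta)| \le n^{-c_0}
\end{equation*}
with probability $1 - O(n^{-c})$, where $m_n(z) = \frac{1}{n}\trace (M_n - z)^{-1}$ and $m_{\operatorname{sc}}$ is the Stieltjes transform of $\rho_{\operatorname{sc}}$. To handle the finite-moment case (rather than exponential decay), I would first truncate each entry at height $n^{1/2-\beta}$ for a small $\beta>0$; by Markov's inequality and the $C_0^{\th}$ moment hypothesis, with probability $1 - O(n^{-c})$ no entry is truncated. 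On this event, the truncated ensemble satisfies subgaussian-type bounds and the standard self-consistent equation analysis closes.

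Next, I would convert the resolvent bound into an eigenvalue-counting bound using a Helffer--Sj\"ostrand representation. Fix a smooth cutoff $\chi$ supported in $[-3,3]$ equal to $1$ on $[-2-\kappa/2,2-\kappa/2]$, and let $f$ be a smooth approximation to $\mathbf{1}_{(-\infty,u]}$ that equals $1$ on $(-\infty, u - n^{-1+\eps/2}]$ and $0$ on $[u + n^{-1+\eps/2}, \infty)$. Writing $\tilde f(E + i\eta)$ for a degree-one quasi-analytic extension and applying the formula
\begin{equation*}
\sum_i f(\lambda_i(M_n)) = \frac{n}{\pi} \Re \int \dd_{\bar z} \tilde f(z)\, m_n(z)\, dE\, d\eta,
\end{equation*}
the difference $N_{[-2,u]}(M_n) - n \int_{-2}^u \rho_{\operatorname{sc}}$ becomes, up to a boundary error of $O(n^{\eps})$ coming from the smoothing scale $n^{-1+\eps/2}$ at the endpoint $u$, an integral of $m_n - m_{\operatorname{sc}}$ against a measure that, after the usual splitting into $\eta \ge 1$, $n^{-1+\delta} \le \eta \le 1$, and $\eta \le n^{-1+\delta}$ regimes, is bounded by the local semicircle law on the first two pieces and by a deterministic $L^1$ estimate (using $|\Im m_n| \le \frac{1}{n\eta}\#\{\text{eigenvalues in window}\}$ and a crude a priori count) on the last piece.

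The endpoint $-2$ is handled by a separate, easier argument: the standard bound that no eigenvalue lies below $-2 - n^{-2/3 + \delta}$ with probability $1 - O(n^{-c})$ (which again follows from the local semicircle law near the edge, or from \cite[Proposition 66]{TV}) ensures the contribution of the edge to the counting discrepancy is absorbed into the $O(n^{\eps})$ error.

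The main obstacle is establishing the local semicircle law at the essentially optimal scale $\eta \gtrsim n^{-1+\delta}$ under only a finite moment hypothesis: the self-consistent perturbation analysis of $m_n$ traditionally uses sub-exponential concentration of quadratic forms $\Bx^* G \Bx$ in the minors, and one must carefully redo this step with the truncation above so that the bad event on which truncation occurs contributes only $O(n^{-c})$ in probability. Once this input is in hand, the remaining steps are routine.
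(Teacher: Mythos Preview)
Your approach is correct in outline but takes a genuinely different and far more laborious route than the paper.  The paper's proof is two sentences: for Wigner matrices whose atom distributions have subexponential decay, the claimed rigidity is exactly \cite[Theorem 2.2]{EYY3} (or \cite[Theorem 6.3]{EYY2}); the finite-moment case is then obtained by choosing a subexponentially decaying ensemble matching four moments with $M_n$ and applying the Four Moment Theorem (Theorem~\ref{theorem:AppFour}) to a smooth test function that localises the one or two eigenvalues $\lambda_i$ with index $i$ near $n\int_{-2}^u \rho_{\operatorname{sc}}$.  No local semicircle law, no Helffer--Sj\"ostrand argument, and no truncation analysis are needed.

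By contrast, you propose to reprove rigidity from scratch under the finite-moment hypothesis: truncate the atom variables, establish a local semicircle law at scale $\eta \ge n^{-1+\delta}$ for the truncated ensemble, and convert to a counting estimate via Helffer--Sj\"ostrand.  This is essentially the strategy underlying the original proofs in \cite{EYY2,EYY3}, and it can indeed be pushed through under finite moments, but it is a substantial undertaking compared to citing \cite{EYY3} and transferring via moment comparison.  One point to be careful about: truncating at height $n^{1/2-\beta}$ does \emph{not} give ``subgaussian-type bounds'' in any uniform sense---the truncated variables are bounded by an $n$-dependent threshold, so the large-deviation inputs to the self-consistent equation (the quadratic-form concentration estimates) degrade with $n$ and must be tracked explicitly rather than invoked as a black box.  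You flag this as the main obstacle, which is accurate; it is surmountable, but the phrase ``the standard self-consistent equation analysis closes'' understates the work involved.

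What each approach buys: the paper's route is dramatically shorter and makes transparent that rigidity under finite moments is a formal consequence of rigidity under subexponential decay together with the Four Moment Theorem.  Your route, once carried out, would be more self-contained and would avoid any reliance on the Four Moment Theorem, at the cost of reproducing a good part of the \cite{EYY3} machinery.
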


\begin{proof}  If the Wigner matrix has exponential decay, then this follows from \cite[Theorem 2.2]{EYY3} or \cite[Theorem 6.3]{EYY2} (and in this case one obtains a much higher probability of success, in particular obtaining $1-O(n^{-A})$ for any $A$).  The general case then follows from the Four Moment Theorem.
\end{proof}

Applying this theorem (with $u$ replaced by $u \pm n^{-1+2\eps}$, say), we see that with probability $1-O(n^{-c})$, the event \eqref{lim} only occurs when
\begin{equation}\label{reg}
 i_1,\ldots,i_k = n \int_{-2}^u \rho_{\operatorname{sc}}(x)\ dx + O(n^\eps).
\end{equation}
By the preceding discussion we may discard the exceptional event of probability $O(n^{-c})$ in which the above assertion fails.  We thus see that up to errors of $o(1)$, we may localize all the indices $i_1,\ldots,i_k$ to the regime \eqref{reg}.  By the triangle inequality, it thus suffices to show that
$$ \E G( \sqrt{n} \lambda_{i_1}(M_n), \ldots, \sqrt{n} \lambda_{i_k}(M_n) )
=
\E G( \sqrt{n} \lambda_{i_1}(M''_n), \ldots, \sqrt{n} \lambda_{i_k}(M''_n) ) + O(n^{-c_0})$$
for all $1 \leq i_1 \leq \ldots \leq i_k \leq n$ and some absolute constant $c_0>0$ independent of $\eps$.  But this follows from Theorem \ref{theorem:AppFour}.

\section{Proof of Theorem \ref{main2} }

The proof of this theorem is based on a   careful inspection of the proof of the weak convergence result in \cite{EPRSY}.
  From \cite[Theorem 1.1]{EPRSY} (in the $k=2$ case) and \cite[Remark 1.1]{EPRSY} (for the general $k$ case) we obtain the weak convergence for all functions $F$ which are bounded and of compact support; thus we have
\begin{equation}\label{lo}
 |\int_{\R^k} F(t) \rho_{n,u}^{(k)}(t)\ dt - \int_{\R^k} F(t) \rho_\Dyson^{(k)}(t)\ dt| \leq o(1)
\end{equation}
whenever $F$ is bounded and supported in a compact set $K$.

Let us now inspect the bounds on the convergence rate $o(1)$ in \eqref{lo} that come from the argument in \cite[\S 4]{EPRSY}.  That argument controls the left-hand side of \eqref{lo} by two expressions, denoted $(I)$ and $(II)$ in \cite[\S 4]{EPRSY}.  The first error term $(I)$ is shown to decay exponentially in $n$, and the dependence on $F$ only appears through a factor of $\|F\|_{L^\infty}$.  The error term $(II)$ is bounded using \cite[Proposition 3.1]{EPRSY}, which in our notation is an estimate of the form
\begin{equation}\label{hi}
 |\int_{\R^k} F(t) (\rho_{n,u}^{(k)})'(t)\ dt - \int_{\R^k} F(t) \rho_\Dyson^{(k)}(t)\ dt| \leq o(1)
\end{equation}
where $(\rho_{n,u}^{(k)})'$ is the $k$-point correlation function corresponding to a certain gauss divisible Wigner matrix.

The bound \eqref{hi} in turn proven using \cite[Proposition 3.3]{EPRSY}.  The convergence of the $k$-point correlation function provided by \cite[Proposition 3.3]{EPRSY} is uniform on bounded sets.  If one uses this uniform convergence in the argument used to prove \cite[Proposition 3.1]{EPRSY}, we see that the upper bound on \eqref{hi} is actually of the form $c(n) \|F\|_\infty$, where $c(n) \to 0$ as $n \to \infty$ depends on the support $K$ of $F$, but is otherwise independent of $F$.  Returning to the estimation of the term $(II)$ in \cite[\S 4]{EPRSY}, we conclude a similar bound for $(II)$. Putting all this together, we obtain a bound of the form
$$
 |\int_{\R^k} F(t) \rho_{n,u}^{(k)}(t)\ dt - \int_{\R^k} F(t) \rho_\Dyson^{(k)}(t)\ dt| \leq c'(n) \|F\|_\infty
$$
where $c'(n) \to 0$ as $n \to \infty$ depends on $K$ but is otherwise independent of $F$.  By duality, this implies that
$$ \int_K | \rho_{n,u}^{(k)}(t)\ dt - \rho_\Dyson^{(k)}(t)|\ dt \leq c'(n)$$
which gives the local $L^1$ convergence.

\begin{remark} A similar inspection the proof of \cite[Theorem 1.2]{Joh1} (and \cite[Lemma 3.1]{Joh1}) reveals that the weak convergence result in \cite{Joh1} could in fact also be retroactively upgraded to local $L^1$ convergence in a similar fashion.  It is likely that one can reduce the regularity and decay hypotheses on the above theorem, for instance by using the methods indicated in \cite[Section 5]{EPRSY}.  However, some minimal regularity hypothesis is certainly needed, as local $L^1$ convergence is of course not possible in the case of discrete distributions.  It is also likely that one can upgrade local $L^1$ convergence further to local uniform convergence under a sufficiently strong regularity hypothesis, especially in view of \cite[Proposition 3.3]{EPRSY} (and also \cite[Corollary 1.3]{maltsev} for the $k=1$ case).
\end{remark}





\section{Proof of Theorem \ref{ni-asym}}\label{asym-sec}

We now prove Theorem \ref{ni-asym}.  Fix $\eps, K, C_0, C_1$; we allow all implied constants to depend on these quantities.  From the trace formula
$$ \sum_{j=1}^\infty p_j = \int_{[0,K]} K_\Dyson(x,x)\ dx = K$$
we see that the $p_j$ are absolutely summable.  In fact, we have a stronger decay property:

\begin{lemma}[Decay faster than any polynomial]\label{so}  Let $A > 0$.  Then one has $p_j \ll_A j^{-A}$ for all $j$.
\end{lemma}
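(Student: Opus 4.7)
The plan is to exploit the fact that $K_\Dyson$ is the integral kernel of a band-limited Fourier projection. Using the elementary identity
$$
 K_\Dyson(x,y) \;=\; \frac{\sin(\pi(x-y))}{\pi(x-y)} \;=\; \int_{-1/2}^{1/2} e^{2\pi i \xi(x-y)}\, d\xi,
$$
one can factorise $T = E^{\ast} E$, where $E : L^2([0,K]) \to L^2([-\tfrac12,\tfrac12])$ is the restricted Fourier transform $(Ef)(\xi) := \int_0^K e^{-2\pi i \xi y}\, f(y)\, dy$. Since $T$ is positive, self-adjoint and compact, its eigenvalues are the squares of the singular values of $E$, namely $p_j = \sigma_j(E)^2$. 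So the problem reduces to bounding the singular values of this truncated Fourier transform.

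Next, I would build explicit rank-$\!\le\! N$ approximations to $E$ by truncating the power series of $e^{-2\pi i \xi y}$ in the variable $y$. Setting
$$
 (E_N f)(\xi) := \sum_{k=0}^{N-1} \frac{(-2\pi i \xi)^k}{k!} \int_0^K y^k f(y)\, dy,
$$
the image of $E_N$ lies in $\operatorname{Span}\{1, \xi, \dots, \xi^{N-1}\}$, hence has dimension at most $N$. Estimating the Taylor remainder via the trivial bounds $|\xi|\le \tfrac12$, $0 \le y \le K$, and $\|f\|_{L^1[0,K]} \le \sqrt K\, \|f\|_{L^2[0,K]}$ yields an operator norm bound of the shape
$$
 \|E - E_N\|_{\operatorname{op}} \;\ll_K\; \frac{(\pi K)^N}{N!},
$$
which decays faster than any polynomial (indeed, super-exponentially) in $N$.

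Finally, the standard min--max characterisation $\sigma_{N+1}(E) = \min_{\operatorname{rank} R \,\le\, N} \|E - R\|_{\operatorname{op}}$ gives
$$
 p_{N+1} \;=\; \sigma_{N+1}(E)^2 \;\ll_K\; \left(\frac{(\pi K)^N}{N!}\right)^{\!2},
$$
which dominates $(N+1)^{-A}$ for any fixed $A$, establishing the lemma. The only real bookkeeping is the norm estimate on the Taylor remainder; no genuine obstacle is expected, and in fact the argument delivers much more than the stated polynomial decay, namely super-exponential decay of $p_j$.
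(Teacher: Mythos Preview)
Your proof is correct and takes a somewhat different route from the paper's. Both arguments rest on the same principle---build a finite-rank approximation and invoke the min--max characterisation of eigenvalues/singular values---but they differ in execution. The paper approximates $T$ itself: since $K_\Dyson$ is smooth on $[0,K]^2$, one extends it smoothly to the torus $(\R/2K\Z)^2$ and truncates the double Fourier series, obtaining a rank-$(2M+1)^2$ operator that approximates $T$ to operator-norm error $O_A(M^{-2A})$ for any $A$; this yields exactly $p_j \ll_A j^{-A}$ and no more. You instead exploit the explicit band-limited representation of the sine kernel to factor $T = E^*E$, reducing to singular values of the simpler operator $E$, and then approximate $E$ by Taylor-truncating the exponential. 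The paper's argument is more robust (it uses only smoothness of the kernel and would apply to any smooth kernel on a compact set), whereas yours is tailored to this particular kernel; in return you obtain the substantially stronger conclusion of super-exponential decay $p_j \ll ((\pi K)^j/j!)^2$, which the smooth-Fourier argument cannot reach.
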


\begin{proof}  The Dyson kernel $K_\Dyson$ is smooth on $[0,K] \times [0,K]$, and can thus be smoothly extended to a function on the torus $(\R/2K\Z) \times (\R/2K\Z)$.  By a Fourier expansion, one can thus approximate $K_\Dyson(x,y)$ uniformly to error $O_A( M^{-2A} )$ by a Fourier series
$$ \sum_{a=-M}^M \sum_{b=-M}^M c_{a,b} e^{2\pi i a x / 2K} e^{2\pi i by / 2K},$$
for any positive integer $M$; thus
$$ K_\Dyson(x,y) = \sum_{a=-M}^M \sum_{b=-M}^M c_{a,b} e^{2\pi i a x / 2K} e^{2\pi i by / 2K} + O_A(M^{-2A})$$
for all $x, y \in [0,K]$.  This decomposition of the kernel $K_\Dyson$ induces a corresponding decomposition of the integral operator 
$$ Tf(x) = \int_{[0,K]} K_\Dyson(x,y) f(y)\ dy$$
as $T = T_1 + T_2$, where
$$ T_1 f(x) = \sum_{a=-M}^M \sum_{b=-M}^M c_{a,b} e^{2\pi i a x/2K} \int_{[0,K]} e^{2\pi i by/2K} f(y)\ dy$$
and $T_2$ is an operator with operator norm $O_A(M^{-2A})$.

Observe that $T_1$ is a finite rank operator, with rank at most $(2M+1)^2$.  By the Courant-Fisher min-max theorem, we thus see that apart from the $(2M+1)^2$ largest eigenvalues, all other eigenvalues of $T$ are of size $O_A(M^{-2A})$.  In other words, $p_j = O_A(M^{-2A})$ whenever $j > (2M+1)^2$, and the claim follows.
\end{proof}

This implies a subgaussian bound on $\sum_{j=1}^\infty \xi_j$:

\begin{lemma}  For any $\lambda > 0$, one has
$$ \P( \sum_{j=1}^\infty \xi_j > \lambda) \ll e^{-c \lambda^2}$$
for some constant $c > 0$ independent of $n$.
\end{lemma}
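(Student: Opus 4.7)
The plan is a Chernoff / moment-generating-function argument. The two ingredients are (i) independence of the $\xi_j$, and (ii) the absolute summability $\sum_{j=1}^\infty p_j = K$, which is the trace identity $\int_{[0,K]} K_\Dyson(x,x)\,dx = K$ noted at the start of this section.

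First I would bound the MGF of $S := \sum_{j=1}^\infty \xi_j$. By the Bernoulli identity $\E e^{t\xi_j} = 1 + p_j(e^t - 1)$, the elementary inequality $1 + x \leq e^x$ applied termwise, and independence,
\[
\E e^{tS} = \prod_{j=1}^\infty \bigl(1 + p_j(e^t - 1)\bigr) \leq \exp\!\bigl(K(e^t - 1)\bigr)
\]
for every $t \geq 0$; the infinite product converges because $\sum_j p_j < \infty$. Markov's inequality then yields, for each $\lambda, t > 0$,
\[
\P(S > \lambda) \leq \exp\!\bigl(-t\lambda + K(e^t - 1)\bigr).
\]

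It remains to optimize in $t$. Using $e^t - 1 \leq t + t^2$ for $t$ bounded by a constant and choosing $t = \min\{1,(\lambda - K)^+/(2K)\}$ converts the exponent to the Gaussian-type expression $-(\lambda - K)^2/(4K)$ in the regime where this choice of $t$ is in the allowed range, yielding a subgaussian tail of the form $\exp(-c(\lambda - K)^2/K)$. In the trivial range $\lambda \leq K$ the claimed bound holds after shrinking $c$. Patching the regimes gives $\P(S > \lambda) \ll e^{-c\lambda^2}$ uniformly in $\lambda > 0$, with $c > 0$ depending only on $K$.

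I do not expect any structural obstacle; the entire argument reduces to the standard Chernoff optimization once the MGF has been controlled via $\sum_j p_j = K$. The only step requiring care is the tuning of $t$ as a function of $\lambda$ and the patching of the small- and large-$\lambda$ regimes, both routine calculus. The super-polynomial decay $p_j \ll_A j^{-A}$ from Lemma \ref{so} is not needed for this particular estimate, though it enters the rest of the proof of Theorem \ref{ni-asym}.
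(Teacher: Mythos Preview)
Your MGF bound $\E e^{tS} \le \exp(K(e^t-1))$ is exactly the moment generating function of a $\mathrm{Poisson}(K)$ variable, and a Poisson variable is \emph{not} subgaussian. Concretely, your optimization only delivers the exponent $-(\lambda-K)^2/(4K)$ in the range where $t=(\lambda-K)/(2K)\le 1$, i.e.\ for $\lambda\le 3K$; there the bound is no better than a constant and can be absorbed into the implied constant anyway. For $\lambda>3K$ your recipe caps $t$ at $1$, and the exponent becomes $-\lambda+K(e-1)$, which is linear in $\lambda$, not quadratic. Even unrestricted optimization in $t$ (taking $t=\log(\lambda/K)$) only yields the Poisson tail $(eK/\lambda)^{\lambda}$, i.e.\ $\exp(-c\lambda\log\lambda)$, still weaker than $e^{-c\lambda^2}$. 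So the final patching step does not go through, and your assertion that the super-polynomial decay of the $p_j$ is ``not needed'' is incorrect: with only $\sum_j p_j=K$, the sum can be arbitrarily close in distribution to $\mathrm{Poisson}(K)$, which violates the claimed bound.

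The paper's proof uses precisely the extra input you discard. It exploits that each $\xi_j\le 1$ to write $\P(S>\lambda)\le \P\bigl(\sum_{j>\lambda/2}\xi_j>\lambda/2\bigr)$, since the first $\lfloor\lambda/2\rfloor$ terms can contribute at most $\lambda/2$. The tail sum then has mean $\sum_{j>\lambda/2}p_j=O_A(\lambda^{-A})$ by Lemma~\ref{so}, and a Chernoff bound applied to a Bernoulli sum with such a tiny mean gives a far stronger tail than the naive Poisson bound. The key idea you are missing is this splitting at index $\lambda/2$, which converts the rapid decay of the $p_j$ into a rapid tail for $S$.
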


\begin{proof}  We may assume without loss of generality that $\lambda > 1$.  Because each $\xi_j$ is bounded by $1$, we have
$$
\P( \sum_{j=1}^\infty \xi_j > \lambda) \leq \P( \sum_{j > \lambda/2} \xi_j > \lambda/2).$$
From Lemma \ref{so}, the random variable $\sum_{j > \lambda/2} \xi_j$ has mean and variance $O_A( \lambda^{-A} )$ for any $A > 0$.  The claim then follows from the Chernoff inequality.
\end{proof}

As a consequence of the above lemma and the Carleman theorem (see e.g. \cite{bai}), the distribution of $\sum_{j=1}^\infty \xi_j$ is determined uniquely by its moments $\E (\sum_{j=1}^\infty \xi_j)^k$ for $k=1,2,\ldots$.  To prove Theorem \ref{ni-asym}, it thus suffices (by Prokhorov's theorem) to show that
$$\lim_{n\to \infty} \E N_I^k = \E \left(\sum_{j=1}^\infty \xi_j\right)^k$$
for $k=1,2,\ldots$.  As the monomial $n^k$ can be expressed as a linear combination of the binomial coefficients $\binom{n}{j}$ for $j=1,\ldots,k$, it suffices to show that
$$\lim_{n\to \infty} \E \binom{N_I}{k} = \E \binom{\sum_{j=1}^\infty \xi_j}{k}$$
for $k=1,2,\ldots$.  

By \eqref{rdef}, \eqref{rho-def}, one has
$$ \E \binom{N_I}{k} \leq \frac{1}{k!} \int_{\R^k} F(t) \rho_n(t)\ dt$$
whenever $F$ is a continuous compactly supported function with $F \geq 1_{[0,K]^k}$ pointwise, and similarly
$$ \E \binom{N_I}{k} \geq \frac{1}{k!} \int_{\R^k} F(t) \rho_n^{(k)}(t)\ dt$$
whenever $F$ is a continuous compactly supported function with $F \leq 1_{[0,K]^k}$ pointwise.  Taking limits using Theorem \ref{main}, and then taking advantage of the bounded nature of $\rho_\Dyson^{(k)}$ (and Urysohn's lemma), we conclude that
$$ \lim_{n \to \infty} \E \binom{N_I}{k} = \frac{1}{k!} \int_{[0,K]^k} \rho_\Dyson^{(k)}(t)\ dt.$$
Meanwhile we have\footnote{Note that all formal interchanges of summation or integration in this argument can be easily justified using Lemma \ref{so}.}
$$ \binom{\sum_{j=1}^\infty \xi_j}{k} = \frac{1}{k!} \sum_{j_1,\ldots,j_k \geq 1, \hbox{ distinct}} p_{j_1} \ldots p_{j_k}$$
so it suffices to show that
$$ \int_{[0,K]^k} \rho_\Dyson^{(k)}(t)\ dt = \sum_{j_1,\ldots,j_k \geq 1, \hbox{ distinct}} p_{j_1} \ldots p_{j_k}.$$
By the spectral theorem, we may write
$$ K_\Dyson(x,y) = \sum_{j=1}^\infty p_j \phi_j(x) \phi_j(y)$$
for some orthonormal real sequence $\phi_j \in L^2([0,K])$, and so (by \eqref{dyson-def} and the multilinearity of determinant)
$$ \rho_\Dyson^{(k)}(t_1,\ldots,t_k) = \sum_{j_1,\ldots,j_k \geq 1} p_{j_1} \ldots p_{j_k} \det( \phi_{j_a}(t_a) \phi_{j_a}(t_b) )_{1 \leq a,b \leq k}.$$
It thus suffices to show that the integral
\begin{equation}\label{jo}
 \int_{[0,K]^k} \det( \phi_{j_a}(t_a) \phi_{j_a}(t_b) )_{1 \leq a,b \leq k}\ dt_1 \ldots dt_k
\end{equation}
equals $1$ when the $j_1,\ldots,j_k$ are distinct, and vanishes otherwise.

If two of the $j_a$ are equal, then we see that two of the rows in the determinant are linearly dependent, so \eqref{jo} indeed vanishes.  Now suppose that the $j_1,\ldots,j_k$ are all distinct.  By cofactor expansion, we may then write \eqref{jo} as
$$
\sum_{\sigma \in S_k} (-1)^{\operatorname{sgn}(\sigma)} \int_{[0,K]^k} \prod_{b=1}^k \phi_{j_b}(t_b) \phi_{j_{\sigma(b)}}(t_b)\ dt_b.$$
By the orthonormality of the $\phi_j$ and Fubini's theorem, the integral here equals $1$ when $\sigma$ is the identity permutation and vanishes otherwise.  The claim follows.

\begin{remark} An inspection of the above argument reveals that one can replace the interval $[0,K]$ by an arbitrary compact set $A$, with the interval $I$ then being replaced by the set $\{ u + \frac{t}{\rho_{\operatorname{sc}}(u) n}: t \in A\}$.  We leave the details to the interested reader.
\end{remark}

\end{document}